\numberwithin{equation}{section}
\def \al{\alpha}
\def \ga{\gamma}
\def \er{\varepsilon}
\def \ze{\zeta}
\def \ph{\varphi}
\def \oo{\omega}
\def \D{\Delta}
\def \O{\Omega}
\def \R{\mathbb{R}}
\def\n{\nabla}
\def\dd{\partial}
\def\div{\operatorname{div}}
\def\1{1\!\!\!\!1}
\theoremstyle{plain}
\newtheorem{theorem}{\bf Theorem}[section]
\newtheorem{lemma}[theorem]{\bf Lemma}
\theoremstyle{remark}
\newtheorem{rem}[theorem]{\bf Remark}
\renewcommand{\le}{\leqslant}
\renewcommand{\ge}{\geqslant}
\renewcommand{\qed}{\vrule height7pt width5pt depth0pt}
\title{On the regularity of solutions to the equation
$-\D u + b \cdot \n u = 0$}
\author{N. Filonov
\thanks{This work is supported by RFBR grant 11-01-00324.}
\footnote{St.Petersburg Department of Steklov Mathematical Institute,
27 Fontanka, 191023 St.Petersburg, and
St.Petersburg State University, Physics Faculty.}
}
\date{}
\begin{document}
\maketitle

\begin{abstract}
The equation $-\D u + b \cdot \n u = 0$ is considered.
The dependence of the local regularity of a solution $u$
on the properties of the coefficient $b$ is investigated.
\end{abstract}

\hfill{\it To the memory of O.~A.~Ladyzhenskaya}

\section{Formulation of the results}

Denote by $B_R$ a ball in $\R^n$, $n \ge 2$,
of radius $R$ centered at the origin.
We consider the equation
\begin{equation}
\label{11}
-\D u + b \cdot \n u = 0 
\end{equation}
in $B_R$.
We always assume that a scalar function $u \in W_2^1 (B_R)$,
and a vector-valued coefficient $b\in L_p (B_R)$, $p\ge 2$.
We understand the equation \eqref{11} in the sense of the integral identity
\begin{equation*}
\int_{B_R} \n u \cdot (\n h + b h)\, dx = 0 
\quad \forall h \in C_0^\infty (B_R) .
\end{equation*}
We are interested in the dependence of the local regularity of 
the solution $u$ of \eqref{11} on the order $p$ of the summability 
of the coefficient $b$.
The aim of the present paper is to list the results, 
and the counterexamples which guarantee the sharpness of the results.
%The main part of these results is not essentially new.
The brief summary is given in the Table 1 below.

The critical case is $p=n$.
If $p>n$, the solution $u$ is continuously differentiable.

\begin{theorem}[\cite{LU}, Chapter III, Theorem 15.1]
\label{t1}
Let $b \in L_p (B_R)$, $p>n$, and let $u \in W_2^1(B_R)$ 
be a solution to the equation \eqref{11}.
Then 
$$
u \in W_p^2 (B_r) \subset C^{1, 1-\frac np} (B_r) \quad 
\forall r <R .
$$
\end{theorem}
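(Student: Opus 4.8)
The statement is the classical regularity result for the Poisson equation, so the plan is a bootstrap. Write \eqref{11} as $-\D u = f$ with $f := -\,b\cdot\n u$, and improve the integrability of $\n u$ step by step, alternating the interior $L_q$-estimate for $-\D$ (the Calder\'on--Zygmund inequality) with the Sobolev embedding $W_q^2\hookrightarrow W_{q^*}^1$, where $\tfrac1{q^*}=\tfrac1q-\tfrac1n$ when $q<n$, and $W_q^2\hookrightarrow C^{1,1-n/q}$ when $q>n$.

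To localize, fix $r<R$ and choose a finite chain of radii $r=\rho_0<\rho_1<\dots<\rho_N=R$, with $N$ to be fixed below in terms of $n$ and $p$ only. For a cutoff $\zeta\in C_0^\infty(B_{\rho_{k+1}})$ with $\zeta\equiv1$ on $B_{\rho_k}$, testing the integral identity against functions of the form $\zeta\varphi$, $\varphi\in C_0^\infty$, shows that $v:=\zeta u$ solves
$$
-\D v \;=\; -\,\zeta\, b\cdot\n u \;-\; 2\,\n\zeta\cdot\n u \;-\; u\,\D\zeta
$$
in $\R^n$ in the sense of distributions. Whenever the right-hand side lies in $L_q$, the interior estimate yields $u\in W_q^2(B_{\rho_k})$, and then Sobolev gives $\n u\in L_{q^*}(B_{\rho_k})$ (with $q^*=\infty$, i.e. $u\in C^{1,1-n/q}(B_{\rho_k})$, once $q>n$).

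Now run the iteration, starting from $\n u\in L_2(B_R)$, which is given. Suppose at the current stage $\n u\in L_s$ on the ball in use, and set $\tfrac1q:=\tfrac1p+\tfrac1s$. Then $q<s$, so the middle term $\n\zeta\cdot\n u$ lies in $L_q$ on a bounded ball; $\zeta\,b\cdot\n u$ lies in $L_q$ by H\"older; and $u\,\D\zeta$ lies in $L_q$ because the regularity produced for $u$ at the previous step (or, initially, $u\in W_2^1\subset L_{2^*}$) gives $u$ more than enough integrability. Hence $u\in W_q^2$ on the next ball, and there $\n u\in L_{s'}$ with
$$
\frac1{s'}\;=\;\frac1q-\frac1n\;=\;\frac1s-\Bigl(\frac1n-\frac1p\Bigr).
$$
Thus each step lowers the reciprocal exponent of $\n u$ by the fixed positive quantity $\tfrac1n-\tfrac1p$, so after a number of steps depending only on $n$ and $p$ one reaches $q>n$; take $N$ to be that number. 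This yields $\n u\in L_\infty(B_{\rho_{N-1}})$.

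Finally, with $\n u\in L_\infty(B_{\rho_{N-1}})$ the right-hand side above belongs to $L_p(B_{\rho_{N-1}})$ (the term $\zeta\,b\cdot\n u$ because $b\in L_p$, the other two because they are better), so one last application of the interior estimate gives $u\in W_p^2(B_r)$, and $W_p^2(B_r)\subset C^{1,1-n/p}(B_r)$ is Sobolev's embedding for $p>n$. The only delicate point is the step at which the running exponent $q$ passes through the value $n$, where $W_q^2$ fails to embed in $W_\infty^1$: there one uses $W_q^2\hookrightarrow W_s^1$ for every finite $s$ and performs one extra iteration, which again overshoots $n$ since $p>n$; alternatively one picks the chain of radii so that $q$ is never exactly $n$. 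Everything else is routine; the substantive input is the interior $L_q$-theory for the Laplacian.
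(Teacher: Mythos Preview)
The paper does not give its own proof of this theorem: it is quoted from \cite{LU}, Chapter~III, Theorem~15.1, and used as a black box. Your bootstrap argument is the standard route to this result and is correct as written; the handling of the borderline case $q=n$ by a slight perturbation of the exponent is also standard. In fact, the paper employs exactly your iteration scheme (cutoff, $L_q$-estimate for $-\Delta$, Sobolev embedding $W_q^2\subset W_{nq/(n-q)}^1$, repeat) in Remark~\ref{r26} to prove the closely related Theorem~\ref{t2}, so your approach matches both the classical source and the spirit of the paper.
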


Here and in what follows by $u \in W_p^2 (B_r)$ we mean that 
the restriction of $u$ onto the ball $B_r$ belongs to this space,
$\left. u \right|_{B_r} \in W_p^2 (B_r)$.

If $p=n$ the properties of solution depend on the dimension,
whether $n=2$ or $n>2$.

\subsection{Case $n=2$}
Let us consider two simple examples. 
The first example shows that when $p=n=2$ a solution $u$ can be unbounded.
The second one shows that even if we assume a priori a solution to be bounded,
then it can fail to be H\" older continuous.

{\bf Example 1.}
Let $n=2$, $R=1/e$,
$$
u(x)= \ln |\ln|x||, \quad b(x) = \frac{-x}{|x|^2 \ln |x|} .
$$
Then $b \in L_2 (B_{1/e})$, 
$u \in \mathring W_2^1 (B_{1/e})$, and \eqref{11} is satisfied,
but $u \notin L_\infty (B_{1/e})$.

{\bf Example 2.}
Let $n=2$, $R=1/2$,
$$
u(x)= \frac1{\ln|x|}, \quad 
b(x) = - \frac{2x} {|x|^2 \ln |x|} .
$$
Then $b \in L_2 (B_{1/2})$, 
$u \in W_2^1 (B_{1/2}) \cap C(\overline{B_{1/2}})$, and \eqref{11} is satisfied.
But $u \notin C^\al (B_{1/2})$ for any $\al >0$.

The situation changes if the coefficient $b$ 
satisfies an extra condition $\div b = 0$.

\begin{theorem}
\label{t12}
Let $n=2$, $b \in L_2 (B_R)$ and $\div b = 0$.
Let $u \in W_2^1(B_R)$ be a solution to equation \eqref{11}.
Then 
$$
u \in \bigcap_{q<2} W_q^2 (B_r) \subset \bigcap_{\al<1} C^\al (B_r) \quad 
\forall r <R .
$$
\end{theorem}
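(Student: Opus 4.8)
The plan is to use the two–dimensional structure of the equation, namely that a divergence–free field is a rotated gradient. Since $\div b=0$ in the (simply connected) ball $B_R$, there is a stream function $\ph\in W_2^1(B_R)$ with $b=(-\dd_2\ph,\dd_1\ph)$ and $\|\n\ph\|_{L_2(B_R)}=\|b\|_{L_2(B_R)}$. A $W_2^1$–function in the plane is exponentially integrable (Trudinger--Moser): $\exp(\la|\ph|)\in L_1(B_\rho)$ for every $\rho<R$ and every $\la>0$, so in particular $\ph\in L_s(B_\rho)$ for every $s<\infty$. A one–line computation using $\div b=0$ gives the identity $b\cdot\n u=-\div(\ph\,\n^\perp u)$ with $\n^\perp u=(-\dd_2u,\dd_1u)$, so that \eqref{11} is equivalent to the divergence–form equation
$$
\div\big((I+\ph J)\,\n u\big)=0\quad\text{in }B_R ,
$$
where $J$ is the rotation by $\pi/2$. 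The point is that $(I+\ph J)\xi\cdot\xi=|\xi|^2$: the equation is coercive with constant $1$, although the coefficient matrix is unbounded.

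Next I would run a Caccioppoli estimate on this equation. Testing with $\eta^2(u-k)$, where $\eta$ is a cutoff between concentric balls $B_\rho\subset B_{2\rho}\subset B_R$ and $k$ is the mean value of $u$ over $B_{2\rho}$, and using that the antisymmetric part $\ph J$ contributes nothing to the leading term, $\int\eta^2\,\n u\cdot(I+\ph J)\n u=\int\eta^2|\n u|^2$, one obtains
$$
\int_{B_\rho}|\n u|^2\,dx\le \frac C{\rho^2}\int_{B_{2\rho}}|u-k|^2\,(1+\ph^2)\,dx .
$$
Applying Hölder's inequality with a large exponent $m$ to split off a factor $\|1+\ph^2\|_{L_m(B_{2\rho})}$, and then the two–dimensional Sobolev--Poincaré inequality to estimate $\|u-k\|_{L_{2m'}(B_{2\rho})}$ by $\|\n u\|_{L_t(B_{2\rho})}$ with $t=t(m)<2$, one arrives at a reverse Hölder inequality for $|\n u|$ whose right–hand side carries the locally integrable weight $(1+\ph^2)^m$.

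Since $(1+\ph^2)^m\in L_\si(B_\rho)$ for every $\si<\infty$, a version of Gehring's lemma — the self–improvement of reverse Hölder inequalities allowing a right–hand side of arbitrarily high integrability (equivalently, the higher–integrability theory for divergence–form elliptic equations whose coefficient matrix has bounded symmetric part and antisymmetric part in the exponential class) — yields $\n u\in L_p(B_r)$ for every $p<\infty$ and every $r<R$. Granting this, the conclusion is routine: by Hölder $b\cdot\n u\in L_q(B_r)$ for every $q<2$ (as $b\in L_2$ and $\n u\in L_{2q/(2-q)}$), so the interior $L_q$–estimate for $-\D$ applied to $-\D u=-b\cdot\n u$ gives $u\in W_q^2(B_r)$ for all $q<2$, $r<R$; the Sobolev embedding $W_q^2(B_r)\subset C^{2-\frac2q}(B_r)$ (valid for $q>1$ in dimension two), with $2-\frac2q\to1$ as $q\to2$, then yields $u\in C^\al(B_r)$ for all $\al<1$.

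I expect the Gehring step to be the main obstacle: one must carry the unbounded weight $\ph$ through the self–improvement bookkeeping and verify that the higher–integrability exponent can be pushed all the way to $+\infty$, which is exactly where the finiteness of \emph{every} $L_s$–norm of $\ph$ (not merely of some norm with $s>2$) is used. A technically lighter, but strictly weaker, alternative for the first step is the div–curl lemma of Coifman--Lions--Meyer--Semmes: since $\div b=0$, $\rot\,\n u=0$ and $b,\n u\in L_2$, the product $b\cdot\n u$ lies in the local Hardy space $\mathcal H^1_{\mathrm{loc}}(B_R)$, whence $u\in W_1^2(B_r)$; but this merely returns $\n u\in L_2$ again and does not by itself break the $L_2$–barrier, so an argument of reverse–Hölder type seems unavoidable for the full statement.
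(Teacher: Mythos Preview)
Your approach is genuinely different from the paper's, and the gap you yourself flag is real and, as stated, not easily closed.

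The reverse H\"older inequality you derive has a \emph{multiplicative} weight $\bigl(\fint_{B_{2\rho}}(1+\ph^2)^m\bigr)^{1/m}$ that is not uniformly bounded over all balls, so the standard Gehring lemma does not apply. One can repair this by noting that $\ph$ is determined only up to an additive constant (since $\div(J\n u)=0$) and replacing $\ph$ by $\ph-\ph_{B_{2\rho}}$ on each ball; then John--Nirenberg (using $W_2^1(\R^2)\subset BMO$) makes the constant uniform, $\le C_m(1+\|b\|_{L_2}^2)$. But even after this fix, Gehring delivers only $\n u\in L_{2+\er}$ for one small $\er$, hence $u\in C^\al$ for one small $\al$---this is precisely the Nazarov--Ural'tseva result the paper quotes as a remark, not the full statement $\al<1$ arbitrary. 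A na\"ive bootstrap through $-\D u=-b\cdot\n u$ then stalls: with $b\in L_2$ and $\n u\in L_{2+\er}$ one gets $b\cdot\n u\in L_q$ with $q=\tfrac{2(2+\er)}{4+\er}$, whence $u\in W_q^2\subset W^1_{2q/(2-q)}=W^1_{2+\er}$, i.e.\ no gain. So the route ``Caccioppoli $\to$ Gehring $\to$ bootstrap'' does not, without a further idea, reach $\bigcap_{q<2}W_q^2$.

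You discard the div--curl lemma of Coifman--Lions--Meyer--Semmes as ``strictly weaker'', but this is exactly where the paper's argument diverges from yours and succeeds. The paper does \emph{not} use CLMS to gain integrability of $\n u$. Instead it uses CLMS to prove the bilinear estimate
\[
\Bigl|\int_{B_R} b\cdot\n\ph\,\psi\,dx\Bigr|\le C\|b\|_{L_2}\|\n\ph\|_{L_2}\|\n\psi\|_{L_2},
\]
which yields \emph{uniqueness} of $\mathring W_2^1$--solutions to the Dirichlet problem $-\D v+b\cdot\n v=f$. Separately, once one localizes so that $\|b\|_{L_2(B_R)}$ is small, a Neumann--series perturbation of the Dirichlet Laplacian produces, for every $q<2$ and every $f\in L_q$, a \emph{strong} solution $v\in W_q^2$. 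Multiplying $u$ by a cutoff $\ze$ puts $\ze u$ into this framework with a right--hand side in $\bigcap_{q<2}L_q$; uniqueness then forces $\ze u$ to coincide with the strong solution, giving $u\in\bigcap_{q<2}W_q^2(B_r)$ in one stroke. No self--improvement is needed.

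In short: your divergence--form rewriting and Caccioppoli estimate are correct and recover the known $C^\al$--for--some--$\al$ result, but the claim that Gehring can be ``pushed all the way to $+\infty$'' is unsupported, and the bootstrap is critical. The paper's method---CLMS for uniqueness, perturbation for existence---sidesteps the obstruction entirely.
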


We prove Theorem \ref{t12} in the next section.

\begin{rem}
In \cite{NU} a more general equation
\begin{equation}
\label{12}
- \div (a \n u) + b \cdot \n u = 0 
\end{equation}
is considered.
The matrix-coefficient $a(x)$ is assumed to be positive and bounded,
\begin{equation}
\label{13}
0 < \al_0 \1 \le a(x) \le \al_1 \1 \ ,
\end{equation}
here $\1$ is the identity matrix.
If $b \in L_2 (B_R)$, $\div b = 0$, then a solution $u$ 
to \eqref{12} is H\" older continuous, $u \in C^\al$ with some $\al > 0$
(see Corollary 2.3 and the comments at the end of \S 2 in \cite{NU}).
\end{rem}

\begin{rem}
If the coefficient $b$ satisfies a slightly 
stronger condition than $b \in L_2$,
$$
\int_{B_R} |b(x)|^2 \ln (1+|b(x)|^2)\, dx < \infty
$$
(without the divergence-free condition), 
then the statement of Theorem \ref{t12} remains valid, 
see \S \ref{p44} below.
\end{rem}

\subsection{Case $n\ge 3$}
In this case, the condition $b \in L_n$ is sufficient for $u$
to be H\" older continuous.

\begin{theorem}
\label{t2}
Let $n \ge 3$, $b \in L_n (B_R)$, 
and $u \in W_2^1 (B_R)$ be a solution to equation \eqref{11}.
Then 
\begin{equation*}
u \in \bigcap_{q<n} W_q^2 (B_r) \subset \bigcap_{\al < 1} C^\al (B_r) \quad 
\forall r < R .
\end{equation*}
\end{theorem}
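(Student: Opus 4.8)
The plan is to reduce the theorem to an integrability statement for $\n u$ and then to run a bootstrap powered by the local smallness of $b$ in $L_n$. The H\"older inclusion is immediate: it is the Sobolev embedding $W_q^2(B_r)\hookrightarrow C^{0,2-n/q}(B_r)$, valid for $n/2<q<n$, together with $2-n/q\to1$ as $q\to n^-$. By the interior Calder\'on--Zygmund estimate for $\D$, it therefore suffices to show
\begin{equation*}
\n u\in L_s(B_r)\qquad\text{for every }s<\infty\text{ and every }r<R ,
\end{equation*}
since then, for any $r<r_1<R$, $\D u=b\cdot\n u\in L_q(B_{r_1})$ with $1/q=1/n+1/s<1$ (so $q<n$, as close to $n$ as we please), whence $u\in W_q^2(B_r)$.

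Here lies the obstacle: a naive iteration does \emph{not} improve integrability. From $\n u\in L_s$ one gets $b\cdot\n u\in L_q$, $1/q=1/n+1/s$, then $u\in W_q^2$ locally, and by Sobolev $\n u\in L_{q^*}$ with $1/q^*=1/q-1/n=1/s$ --- no gain, because $L_n$ is exactly the borderline space for the drift term. The device I would use to break this is the local smallness of $b$: since $b\in L_n(B_R)$, for every $\er>0$ and every compact $K\subset B_R$ there is $\rho=\rho(\er,K)>0$ with $\|b\|_{L_n(B_\rho(x_0))}\le\er$ for all $x_0\in K$ (absolute continuity of $\int|b|^n$, made uniform over $K$ by a finite subcover).

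On such a small ball I would localize: take $\eta\in C_0^\infty(B_\rho(x_0))$ with $\eta\equiv1$ on $B_{\rho/2}(x_0)$, and set $U:=\eta u$ (extended by $0$), so that in $\mathcal D'(\R^n)$
\begin{equation*}
\D U-b\cdot\n U=g ,\qquad g:=2\,\n\eta\cdot\n u+u\,\D\eta-(b\cdot\n\eta)\,u ,
\end{equation*}
with $g$ supported in $B_\rho\setminus B_{\rho/2}$. Inverting $\D$ with the Newtonian potential $N$ (no harmonic part, as $U$ has compact support) turns this into a fixed-point identity $(I-T)\n U=F$ with $TV:=-\n N*(b\cdot V)$ and $F:=-\n N*g$. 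Since $\n N$ is a constant multiple of $x\,|x|^{-n}$, H\"older's inequality followed by the Hardy--Littlewood--Sobolev inequality for the Riesz potential of order $1$ shows that $T$ is bounded on $L_s(\R^n)$, for each $s$ with $\tfrac1n+\tfrac1s<1$, with norm $\le C_{n,s}\|b\|_{L_n(B_\rho)}$. Choosing $\rho$ so small that $C_{n,s}\|b\|_{L_n(B_\rho)}\le\tfrac12$ for all $s$ in whatever \emph{finite} list of exponents will occur below, $I-T$ becomes invertible on each of those $L_s$, and $\n U=(I-T)^{-1}F=\sum_{j\ge0}T^jF$, the Neumann series converging in \emph{every} such $L_s$; hence $\n U$ lies in every one of those $L_s$ that contains $F$.

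Then I would iterate, starting from $\n u\in L_2(B_\rho(x_0))$. While $s<n$: Sobolev--Poincar\'e gives $u\in L_{s^*}(B_\rho)$, so every term of $g$ lies in $L_s$ --- for the delicate one, $\|(b\cdot\n\eta)u\|_{L_s}\le\|b\|_{L_n}\|u\|_{L_{s^*}}$ --- hence $F\in L_{s^*}$ and so $\n u\in L_{s^*}(B_{\rho/2}(x_0))$, a genuine gain. Since $s^*=ns/(n-s)\to\infty$ as $s\to n^-$, finitely many such steps push the exponent past $n$; once $\n u\in L_s$ with $s>n$ (so $u$ is locally bounded and the term $(b\cdot\n\eta)u$ has small $L_n$-norm), the same scheme, now splitting $\n N$ into its parts on $B_1$ and on $\R^n\setminus B_1$ and using Young's inequality, reaches $\n u\in L_m(B_{\rho/2}(x_0))$ for any prescribed $m<\infty$ in one more step. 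Covering a neighbourhood of $\overline{B_r}$, $r<R$, by finitely many such balls --- their radius shrinking with $m$, but the number of steps bounded independently of $m$ --- I would obtain $\n u\in L_s(B_r)$ for all $s<\infty$, which completes the proof. The delicate point throughout is the borderline structure: the local smallness of $\|b\|_{L_n}$ has to be exploited so that $I-T$ is a contraction \emph{simultaneously} on all the $L_s$ spaces entering the finitely many steps (this forces $\rho$ to depend on the target exponent while keeping the step count bounded), and the crossing of the exponent through the critical value $n$, where both the Sobolev embedding and the Hardy--Littlewood--Sobolev inequality degenerate, has to be handled by isolating the $L_n$-small part of the commutator term; everything else is routine.
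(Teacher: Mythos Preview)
Your proof is correct and rests on the same mechanism as the paper's: the absolute continuity of $\int|b|^n$ forces $\|b\|_{L_n}$ to be small on small balls, which makes $-\D+b\cdot\n$ a perturbation of $-\D$ invertible by a Neumann series, and then a cutoff-and-bootstrap raises the integrability of $\n u$ in finitely many steps. The implementations differ: the paper works with the Dirichlet Laplacian $L_0$ on the ball, proves directly that $(I+b\cdot\n L_0^{-1})^{-1}$ is bounded on $L_q$ for $1<q<n$ (Lemma~\ref{l22}), pairs this with a separate weak-uniqueness lemma in $\mathring W_2^1$ (Lemma~\ref{l27}), and then iterates the clean implication $u\in W_q^1\Rightarrow u\in W_q^2\subset W_{nq/(n-q)}^1$; you instead invert $\D$ by the Newtonian potential on $\R^n$, control the resulting convolution via Hardy--Littlewood--Sobolev, and bootstrap $\n u$ in $L_s$. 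The paper's route is a bit tidier --- no kernel splitting and no special handling of the passage through $s=n$ --- but both arguments are sound and yield the same conclusion.
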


This theorem is probably known, 
although we have not found a relevant reference.
Theorem \ref{t2} can be proved in the same way that Theorem \ref{t12},
see Remark \ref{r26} below.

The following example shows that a solution $u$
can be unbounded when $p<n$.

{\bf Example 3.}
Let $n \ge 3$, $R=1$,
$$
u(x) = \ln |x|, \quad b(x) = \frac{(n-2)x}{|x|^2} .
$$
Then $b \in L_p (B_1)$ for all $p<n$, 
$u \in \mathring W_2^1 (B_1)$, and \eqref{11} is satisfied,
but $u \notin L_\infty (B_1)$.

Furthermore, for $p<n$, if we assume a priori a solution to be bounded,
it can be discontinuous, even for divergence-free coefficient $b \in L_p$.

\begin{theorem}
\label{t4}
Let $n \ge 3$, $p<n$.
There exist a vector-function $b_0 \in L_p (B_{1/2})$, $\div b_0 = 0$,
and a scalar function $u_0 \in W_2^1 (B_{1/2}) \cap L_\infty (B_{1/2})$ 
such that the equation \eqref{11} is satisfied, but $u_0 \not\in C(B_{1/2})$.
\end{theorem}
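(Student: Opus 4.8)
The natural approach is a self-similar construction. I would fix a ``cell'' on the unit annulus $\Omega=\{1<|y|<2\}$, consisting of a non-constant $\psi\in C^\infty(\overline\Omega)$ and a field $\beta\in L_p(\Omega)$ with $\div\beta=0$ and $\beta\cdot\nabla\psi=\Delta\psi$ (so that $\psi$ solves the cell equation), and then glue rescaled copies of it on the dyadic annuli $A_j=\{2^{-j-1}<|x|<2^{-j}\}$, $j\ge1$, which exhaust $B_{1/2}\setminus\{0\}$: one puts $u_0(x)=\psi(2^{j+1}x)$ and $b_0(x)=2^{j+1}\beta(2^{j+1}x)$ on $A_j$. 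This keeps the equation invariant, and the boundary behaviour of $(\psi,\beta)$ on $\partial\Omega$ must be prescribed so that neighbouring copies join across the spheres $\{|x|=2^{-j}\}$ in such a way that $u_0$ is $C^1$ there and the normal component of $b_0$ is continuous there (the two conditions needed so that no surface Dirac mass appears, respectively, in $\Delta u_0$ or in $\div b_0$). The key point of the scheme is that, since $\psi$ is non-constant, $u_0$ runs through the whole range of $\psi$ on \emph{every} $A_j$, hence $\limsup_{x\to0}u_0>\liminf_{x\to0}u_0$ and $u_0\notin C(B_{1/2})$, no matter how small a neighbourhood of the origin we look in.

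Granting a suitable cell, the remaining properties follow by summing geometric series. A change of variables gives $\|\nabla u_0\|_{L_2(A_j)}^2=2^{(j+1)(2-n)}\|\nabla\psi\|_{L_2(\Omega)}^2$, which is summable precisely because $n\ge3$; together with boundedness of $u_0$, continuity of $u_0$ across the $A_j$-interfaces, and the fact that a point has zero $W_2^1$-capacity, this yields $u_0\in W_2^1(B_{1/2})$. Likewise $\|b_0\|_{L_p(A_j)}^p=2^{(j+1)(p-n)}\|\beta\|_{L_p(\Omega)}^p$, summable precisely because $p<n$, so $b_0\in L_p(B_{1/2})$. In each $A_j$ the pair $(u_0,b_0)$ solves \eqref{11} classically; the $C^1$- and normal-trace matching make the equation and $\div b_0=0$ hold across all the spheres $\{|x|=2^{-j}\}$; and the origin is removed by the usual cut-off argument, which works because, again by $n\ge3$, $\rho^{-1}\int_{B_\rho}|\nabla u_0|\to0$ and $\rho^{-1}\int_{B_\rho}|b_0|\to0$ as $\rho\to0$.

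Everything therefore reduces to constructing the cell, which is where the real work lies. One needs $\psi$ with the prescribed boundary data, non-constant, and such that the underdetermined first-order system $\div\beta=0$, $\beta\cdot\nabla\psi=\Delta\psi$ is solvable with $\beta\in L_p(\Omega)$ and the required normal trace on $\partial\Omega$. The equation fixes the normal component of $\beta$ along the level sets of $\psi$ to be $\Delta\psi/|\nabla\psi|$, while $\div\beta=0$ amounts, on each level set, to a prescribed-divergence equation for the tangential part of $\beta$; solving it requires arranging the level-set geometry of $\psi$ to be simple and checking the compatibility relation $\int_{\{\psi=t\}}\Delta\psi/|\nabla\psi|\,dS=0$ on the closed level sets. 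At the zeros of $\nabla\psi$ one is forced to have $\Delta\psi=0$ as well (otherwise the equation has no finite solution there), and one must further ensure that $\{\nabla\psi=0\}$ is only a finite set of interior points, near which $\Delta\psi/|\nabla\psi|$ stays bounded, so that $\beta\in L_p$; this is exactly the place where dimension $n\ge3$ is used and where a radially symmetric $\psi$ fails — for such $\psi$ the set $\{\nabla\psi=0\}$ is a whole sphere along which $\Delta\psi/|\nabla\psi|\sim\operatorname{dist}^{-1}\notin L_p$ — in agreement with the fact that in dimension two a divergence-free $b\in L_2$ forces $u$ to be H\"older continuous (Theorem \ref{t12}). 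The main obstacle is precisely this cell construction: reconciling the $C^1$-matching of the copies, the divergence-free constraint, and the $L_p$-control of $\beta$ near the critical set of $\psi$.
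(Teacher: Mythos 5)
Your scheme reduces the theorem to the construction of a single ``cell'' $(\psi,\beta)$ on the annulus $\Omega=\{1<|y|<2\}$, and you yourself acknowledge that this is where the real work lies --- but you never carry it out, and this is not a routine verification that can be waved through. Worse, the most natural way to realize your matching conditions is provably impossible. If you try to decouple the annuli by taking $\partial_\nu\psi=0$ and $\beta_\nu=0$ on $\partial\Omega$ (which trivially gives the $C^1$-matching of $u_0$ and the continuity of the normal component of $b_0$ across the spheres $\{|x|=2^{-j}\}$), then multiplying $-\D\psi+\beta\cdot\n\psi=0$ by $\psi$ and integrating over $\Omega$ gives
$$
\int_\Omega|\n\psi|^2\,dy=\int_{\dd\Omega}\psi\,\dd_\nu\psi\,dS-\tfrac12\int_{\dd\Omega}\beta_\nu\,\psi^2\,dS=0,
$$
so $\psi$ is constant and the construction collapses. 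Hence the interfaces must carry nonzero energy flux, the boundary data must satisfy the scaling relations $2\,\dd_r\psi(2\omega)=\dd_r\psi(\omega)$ and $2\,\beta_r(2\omega)=\beta_r(\omega)$, and on top of that you must solve the underdetermined system $\div\beta=0$, $\beta\cdot\n\psi=\D\psi$ with control of $\beta$ near the critical set of $\psi$ and with the compatibility conditions on closed level sets that you list. None of this is verified, so the argument as written does not prove the theorem; it only reformulates it.

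For comparison, the paper avoids this difficulty entirely by not attempting an exactly self-similar solution. It takes an explicit divergence-free drift $b_\er$ (formulas \eqref{20}, \eqref{205}) which transports along a cone toward the origin and is of order $z^{-\mu}$, $\mu\in(1,2)$, there; it then shows via a barrier function $v_\er=f_\er(z)\cos\frac{\pi\rho}{2z}$ and the maximum principle (Lemmas \ref{l31} and \ref{l41}) that the smooth solutions $u_\er$ of the regularized Dirichlet problems satisfy $u_\er(0,\dots,0,z)\ge c_1$ for $z\ge2\er$ while $u_\er|_{z=0}=0$; uniform $W_2^1$ and $L_p$ bounds (Theorem \ref{t5} and Lemma \ref{l21}) then allow passage to the limit $\er\to0$, producing a bounded, odd, discontinuous weak solution $u_0$. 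The barrier-plus-compactness route replaces your unsolved cell problem with concrete, checkable inequalities, which is precisely why the paper's proof closes and yours does not. If you want to pursue the self-similar route, the cell construction must be supplied in full, with the energy-identity obstruction above explicitly circumvented.
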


We prove this Theorem in Section 3.

\begin{rem}
It is easy to construct an example of a bounded solution which is not 
H\" older continuous for the case $\div b \neq 0$.
\end{rem}

{\bf Example 4.}
Let $n\ge 3$, $R=1/2$,
$$
u(x)= \frac1{\ln|x|}, \quad 
b(x) = \left( {(n-2)}{|x|} -  \frac2 {|x| \ln |x|} \right) \frac{x}{|x|} .
$$
Then $b \in \cap_{p<n} L_p (B_{1/2})$,
$u \in W_2^1 (B_{1/2}) \cap C(\overline{B_{1/2}})$, and \eqref{11} is satisfied.
But $u \notin C^\al (B_{1/2})$ for any $\al >0$.

For the proof of Theorem \ref{t4} we follow the approach 
of the paper \cite{SSSZ}.
We consider together with \eqref{11} the equation
\begin{equation}
\label{16}
- \D u + \div (b u) = 0 .
\end{equation}
We understand this equation in the sense
\begin{equation*}
\int_{B_R} u (\D h + b \cdot \n h)\, dx = 0 \ \ \forall h \in C_0^\infty (B_R) ;
\end{equation*}
the integral is well defined if $u, b \in L_2 (B_R)$.
It is clear that every solution $u \in W_2^1 (B_R)$ to equation \eqref{11}
solves also equation \eqref{16} if $\div b = 0$.
The converse statement is valid for bounded solutions.

\begin{theorem}[\cite{SSSZ}, Proposition 4.1]
\label{t5}
Let $u \in L_\infty (B_R)$,
$b\in L_2 (B_R)$, $\div b = 0$, and \eqref{16} be satisfied. 
Then $u \in W_2^1 (B_r)$ for all $r<R$, 
$u$ solves the equation \eqref{11}, and the estimate
$$
\| \n u \|_{L_2(B_r)} \le 
C (n, r, R) \left( 1 + \|b\|_{L_1(B_R)} \right)^{1/2} \| u \|_{L_\infty(B_R)} 
$$
holds.
\end{theorem}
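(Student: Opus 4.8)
The plan is a regularization together with a Caccioppoli-type estimate, the point being to exploit $\div b=0$ so that only $\|b\|_{L_1}$, rather than the larger $\|b\|_{L_2}$, enters the bound. First I would mollify: let $u^\er=u*\rho_\er$ for a standard mollifier $\rho_\er$. Since convolution commutes with differentiation, $u^\er$ is smooth on $B_{R-\er}$ and solves there, classically,
$$
-\D u^\er+\div\big((bu)*\rho_\er\big)=0 .
$$
I record three elementary facts: $\|u^\er\|_{L_\infty}\le\|u\|_{L_\infty(B_R)}$; $\|(bu)*\rho_\er\|_{L_1(B_{R-\er})}\le\|u\|_{L_\infty(B_R)}\|b\|_{L_1(B_R)}$; and the commutator $r_\er:=(bu)*\rho_\er-b\,u^\er$ tends to $0$ in $L_2$ on compact subsets of $B_R$. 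The last assertion is just dominated convergence: $|r_\er|\le 2\|u\|_{L_\infty}\big(|b|*\rho_\er+|b|\big)$ is $L_2$-dominated and $r_\er\to0$ a.e. because $u^\er\to u$ a.e. (here the hypothesis $u\in L_\infty$ is essential).

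Next, fix $r<R$ and a cutoff $\ze\in C_0^\infty(B_R)$ with $0\le\ze\le1$, $\ze\equiv1$ on $B_r$, $|\n\ze|\le C/(R-r)$; for $\er$ small enough, test the regularized equation with $\ze^2u^\er$. Integration by parts gives
$$
\int\ze^2|\n u^\er|^2\,dx=-2\int\ze\,u^\er\,\n\ze\cdot\n u^\er\,dx+\int\big((bu)*\rho_\er\big)\cdot\ze^2\n u^\er\,dx+2\int\big((bu)*\rho_\er\big)\cdot\ze\,u^\er\,\n\ze\,dx .
$$
In the middle term I would substitute $(bu)*\rho_\er=b\,u^\er+r_\er$. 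For the main part, smoothness of $u^\er$ gives $b\,u^\er\cdot\ze^2\n u^\er=\tfrac12\ze^2 b\cdot\n\big((u^\er)^2\big)$, and since $\div b=0$ one has $\div(\ze^2 b)=2\ze\,\n\ze\cdot b$ in $\mathcal D'$, so that
$$
\int b\,u^\er\cdot\ze^2\n u^\er\,dx=-\int\ze\,(u^\er)^2\,\n\ze\cdot b\,dx ,
$$
whose absolute value is at most $\|u\|_{L_\infty(B_R)}^2\|\n\ze\|_{L_\infty}\|b\|_{L_1(B_R)}$ — this is exactly the point where $\div b=0$ converts the dangerous term into one controlled by $\|b\|_{L_1}$. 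The remaining pieces are routine: by Young's inequality the two terms linear in $\n u^\er$ together with $\int r_\er\cdot\ze^2\n u^\er$ are absorbed into $\tfrac12\int\ze^2|\n u^\er|^2$ at the cost of $C\big(\|u\|_{L_\infty(B_R)}^2+\|r_\er\|_{L_2(\supp\ze)}^2\big)$, while $2\int\big((bu)*\rho_\er\big)\cdot\ze u^\er\n\ze$ is bounded directly by $2\|u\|_{L_\infty(B_R)}^2\|\n\ze\|_{L_\infty}\|b\|_{L_1(B_R)}$. Hence
$$
\int\ze^2|\n u^\er|^2\,dx\le C(n,r,R)\Big(\|u\|_{L_\infty(B_R)}^2\big(1+\|b\|_{L_1(B_R)}\big)+\|r_\er\|_{L_2(\supp\ze)}^2\Big),
$$
uniformly in $\er$.

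Finally I would let $\er\to0$. The uniform bound forces $\n u^\er\rightharpoonup\n u$ weakly in $L_2(B_r)$ (the weak limit must be $\n u$ since $u^\er\to u$ in $L_2$), so $u\in W_2^1(B_r)$; and since $\|r_\er\|_{L_2(\supp\ze)}\to0$, weak lower semicontinuity of the $L_2$-norm yields precisely the asserted estimate. That $u$ solves \eqref{11} follows by passing to the limit in $\int\n u^\er\cdot\n h=\int\big((bu)*\rho_\er\big)\cdot\n h$ (legitimate once $\supp h\subset B_{R-\er}$) to obtain $\int\n u\cdot\n h=\int bu\cdot\n h$ for every $h\in C_0^\infty(B_R)$, and then observing that $uh\in W_2^1$ has compact support and $\div b=0$, so $\int b\cdot\n(uh)\,dx=0$, i.e. $\int bu\cdot\n h\,dx=-\int h\,b\cdot\n u\,dx$; thus $\int\n u\cdot(\n h+bh)\,dx=0$. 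I expect the only genuine obstacle to be the middle term of the Caccioppoli identity: a plain Cauchy–Schwarz bound there would give only $\|b\|_{L_2}$, so the argument really hinges on (i) the cancellation produced by $\div b=0$ and (ii) the convergence $r_\er\to0$ locally in $L_2$, which in turn requires $u\in L_\infty$.
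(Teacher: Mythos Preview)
The paper does not supply its own proof of this statement: Theorem~\ref{t5} is quoted from \cite{SSSZ} (Proposition~4.1 there) and is used only as a black box in the proofs of Theorems~\ref{t6} and~\ref{t4}. So there is nothing in the present paper to compare your argument against.

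That said, your argument is correct and is the natural one: mollify, test the regularized equation against $\ze^2 u^\er$, and observe that the only dangerous term is $\int \ze^2\, b\,u^\er\cdot\n u^\er$; since $\ze^2(u^\er)^2\in C_0^\infty(B_R)$ is an admissible test function for $\div b=0$, this term equals $-\int\ze\,(u^\er)^2\,b\cdot\n\ze$ and is controlled by $\|u\|_{L_\infty}^2\|b\|_{L_1}$ alone. One minor remark: your justification that $r_\er\to0$ locally in $L_2$ is slightly imprecise as written (the proposed majorant $|b|*\rho_\er+|b|$ depends on $\er$), but the claim is immediate once you split $r_\er=\big((bu)*\rho_\er-bu\big)-b(u^\er-u)$ and apply standard mollifier convergence to the first piece and dominated convergence, with the fixed majorant $2\|u\|_{L_\infty}|b|\in L_2$, to the second.
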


In order to prove Theorem \ref{t4} we establish

\begin{theorem}
\label{t6}
Let $n \ge 3$, $p<n$.
There are two positive constants $c_0$, $c_1$ such that for any $\er>0$
there exist a vector-function $b_\er \in C^\infty (\overline{B_{1/2}})$,
$\div b_\er = 0$, $\|b_\er\|_{L_p(B_{1/2})} \le c_0$, and a scalar function
$u_\er \in C^\infty (\overline{B_{1/2}})$, $\|u_\er\|_{L_\infty(B_{1/2})} \le 1$,
$\|u_\er\|_{W_2^1(B_{1/2})} \le c_0$, which satisfy 
the equations \eqref{11} and \eqref{16}, and moreover
$$
u_\er (0) = 0, \quad u_\er (0, \dots, 0, 2\er) \ge c_1 .
$$
\end{theorem}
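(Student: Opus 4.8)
Following \cite{SSSZ}, the plan is to build a single ``model'' pair $(b_*,u_*)$ on all of $\R^n$ and to obtain $(b_\varepsilon,u_\varepsilon)$ from it by a dilation. Concretely, suppose one has $b_*\in C^\infty(\R^n)$, $\div b_*=0$, $\supp b_*\subset\{1\le|x|\le R_0\}$ (so $b_*\in L_p(\R^n)$ for every $p$), and $u_*\in C^\infty(\R^n)\cap L_\infty(\R^n)$ with $\|u_*\|_{L_\infty}\le1$, solving \eqref{11}, such that $u_*$ is harmonic on $\{|x|\le1\}$ with $u_*(0)=0$ and nondecreasing along the positive $x_n$-axis, and $u_*$ equals a bounded harmonic function with vanishing monopole term on $\{|x|\ge R_0\}$ (so $u_*=c_\infty+O(|x|^{1-n})$, $\n u_*=O(|x|^{-n})$ at infinity). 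Put $b_\varepsilon(x)=\varepsilon^{-1}b_*(x/\varepsilon)$, $u_\varepsilon(x)=u_*(x/\varepsilon)$ on $B_{1/2}$; once $\varepsilon R_0<1/2$ this is legitimate, $b_\varepsilon$ is supported in $B_{\varepsilon R_0}$ and smooth, $u_\varepsilon\in C^\infty(\overline{B_{1/2}})$, $\div b_\varepsilon=0$, and $u_\varepsilon$ solves \eqref{11} and hence, $b_\varepsilon$ being divergence-free, also \eqref{16}. The scaling identities $\|b_\varepsilon\|_{L_p(B_{1/2})}=\varepsilon^{(n-p)/p}\|b_*\|_{L_p(\R^n)}$, $\|\n u_\varepsilon\|_{L_2(B_{1/2})}\le\varepsilon^{(n-2)/2}\|\n u_*\|_{L_2(\R^n)}$, $\|u_\varepsilon\|_{L_\infty}=\|u_*\|_{L_\infty}\le1$, together with $u_\varepsilon(0)=0$ and $u_\varepsilon(0,\dots,0,2\varepsilon)=u_*(0,\dots,0,2)>0$, then give all the assertions, with $c_1:=u_*(0,\dots,0,2)$ and $c_0$ depending only on $n$, $\|b_*\|_{L_p(\R^n)}$ and $\|\n u_*\|_{L_2(\R^n)}$ (the latter is finite since $n\ge2$ and $\n u_*=O(|x|^{-n})$; alternatively Theorem~\ref{t5} gives the $W_2^1$-bound). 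The hypothesis $p<n$ is what makes $\|b_\varepsilon\|_{L_p(B_{1/2})}$ stay bounded as $\varepsilon\to0$.

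It remains to build the model, and here is where $n\ge3$ is essential. First, $u_*$ cannot be radial near the origin: if $u_*=g(|x|)$ then \eqref{11} forces the radial part of $b_*$ to be $(g''+\tfrac{n-1}{r}g')/g'$, and $\div b_*=0$ forces this to equal $\varkappa r^{1-n}$, which cannot be both bounded near $0$ and compactly supported unless $g'\equiv0$. So near the origin one takes $u_*$ harmonic, e.g. $u_*=\eta x_n$ on $\{|x|\le1\}$ with a small $\eta>0$; then $b_*=0$ there. Secondly, and for the same reason, $u_*$ cannot be exactly constant on an open set at the outer end; it is continued past $\{|x|=R_0\}$ by a bounded harmonic function with no monopole part, which is forced anyway by $\int_{\R^n}\D u_*=\int_{\R^n}\div(u_*b_*)=0$, and which keeps $\n u_*$ nonvanishing (of size $|x|^{-n}$) out to infinity. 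On the intermediate annulus one chooses $u_*$ with $\n u_*\neq0$ throughout, interpolating smoothly — to infinite order at both ends — between the two harmonic pieces and staying nondecreasing along the positive $x_n$-axis; in the radial part of this annulus the explicit profile $g'\propto r^{1-n}\exp(-\varkappa/((n-2)r^{n-2}))$ makes the natural drift $\varkappa\,x|x|^{-n}$ automatically divergence-free.

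It then remains to define $b_*$ on the intermediate annulus. Take $b_*=\dfrac{\D u_*}{|\n u_*|^2}\n u_*+w$, where $w$ is everywhere tangent to the level surfaces $\{u_*=c\}$ (so that $b_*\cdot\n u_*=\D u_*$ is unchanged), chosen so that $\div b_*=0$; since $\D u_*$ vanishes to infinite order on the two bounding spheres, so must $w$, giving $b_*\in C^\infty(\R^n)$ with $\supp b_*\subset\{1\le|x|\le R_0\}$. I expect this to be the main obstacle: constructing $w$ reduces to solving a divergence equation on each surface $\{u_*=c\}$, and for level surfaces that stay strictly inside the annulus this carries a zero-mean solvability constraint that does not hold automatically; one must therefore choose the interpolating $u_*$ so as to meet it — for instance radial throughout the bulk of the annulus, with the non-radial matching confined to thin layers next to the two spheres, where the level surfaces reach into the harmonic regions and the constraint is vacuous. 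This extra room is exactly what $n\ge3$ provides and what fails when $n=2$ (where divergence-free fields are rigid and the correct statement is the positive Theorem~\ref{t12}); once $w$ has been constructed, $u_*$ and $b_*$ are smooth and compactly supported as required, and the theorem follows from the elementary scaling estimates above.
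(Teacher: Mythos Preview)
Your scaling strategy fails for a reason more basic than the construction of the tangential correction $w$: the model pair $(b_*,u_*)$ that you postulate cannot exist. Indeed, you require $b_*\in C_c^\infty(\R^n)$ and $u_*\in C^\infty(\R^n)\cap L_\infty(\R^n)$ solving $-\Delta u_*+b_*\cdot\nabla u_*=0$ on all of $\R^n$, with $u_*(0)=0$ and $u_*(0,\dots,0,2)>0$. But since $b_*$ is bounded, the strong maximum principle applies; and since $u_*$ is bounded and harmonic outside $\supp b_*$, it tends to a constant $c_\infty$ at infinity (here $n\ge3$). If $\sup u_*>c_\infty$, the supremum is attained at a finite point and $u_*$ is constant; likewise for the infimum. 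Hence $u_*\equiv c_\infty$, contradicting $u_*(0)\neq u_*(0,\dots,0,2)$. In other words, Liouville kills any compactly supported smooth drift on $\R^n$. A symptom of this already appears in your own sketch: in the ``radial bulk'' you propose the drift $\varkappa\,x|x|^{-n}$, whose flux through every sphere equals $\varkappa|S^{n-1}|\neq0$; no tangential $w$ can repair this, so a globally divergence-free $b_*$ vanishing inside $\{|x|<1\}$ and carrying this radial part on a shell is impossible.

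This is exactly why the paper does \emph{not} scale a fixed model. It works on a bounded cylinder $\Omega$ (so there is a boundary and no Liouville obstruction), and the drifts $b_\er$ genuinely depend on $\er$: they are built from $H_\er(x)=\rho^{\,n-1}z^{-\mu}\eta(z/\er)\eta(z/\rho)$ with $\mu\in(1,2)$, so that $|b_\er|\lesssim z^{-\mu}$ on the cone $\{\rho<2z\}$ and $\|b_\er\|_{L_\infty}\sim\er^{-\mu}\to\infty$ while $\|b_\er\|_{L_p}$ stays bounded for $p<n/\mu$. The lower bound $u_\er(0,\dots,0,2\er)\ge c_1$ is obtained not from a limit model but from a barrier $v_\er(\rho,z)=f_\er(z)\cos(\pi\rho/2z)$ on the truncated cone $\{\rho<z,\ \er<z<1\}$, where an explicit $f_\er$ with $f_\er(\er)=0$, $f_\er(2\er)\ge c_1$, $f_\er(1)=1$ is produced so that $\Delta v_\er-b_\er\cdot\nabla v_\er>0$; the maximum principle on this cone then gives $u_\er\ge v_\er$ and hence the claimed inequality. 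The construction of such $f_\er$ uses $\mu>1$ essentially, and the barrier computation uses $n-\mu-1>0$, which together force $n\ge3$; the need for $\mu>1$ is precisely why a single-scale model with bounded drift (morally $\mu=0$) cannot do the job.
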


This result was proven in \cite{SSSZ} for $n=3$ and $p=1$.
It is also clear from the construction of $b_\er$ in \cite{SSSZ},
that one can take any power $p<2$.
However, in order to deduce Theorem \ref{t4} from Theorem \ref{t6}
one has to get Theorem \ref{t6} with a power $p \ge 2$.

In Section 2 we prove Theorem \ref{t12}. 
In Section 3 we prove Theorem \ref{t4} and Theorem \ref{t6}.
Some comments are collected in Section 4.

We do not consider the parabolic equation 
$\partial_t u - \D u + b \cdot \n u = 0$,
and the regularity of a solution in dependence of the properties
of a coefficient $b$.
Some results in this direction (under the condition $\div b = 0$)
can be found in \cite{NU, SSSZ, SVZ} (see also references therein).

The author is grateful to prof. G.~Ser\" egin for attracting his
attention to the problem.
Author thanks also A.~Nazarov, 
A.~Pushnitski and T.~Shilkin for valuable comments.

\subsection{Table 1: the local properties of a solution $u$ 
to equation \eqref{11} with $b \in L_p$}
$$
\begin{array}{|c|c|c|}
\hline
 & n=2 & n \ge 3 \\
\hline
& &\\
p>n & u \in C^{1, 1-n/p} & u \in C^{1, 1-n/p}\\
\hline
& &\\
 & \text{In general}\quad u \notin L_\infty, & \\
p=n & \text{or}\quad u \in L_\infty,\ u \notin C^\al . 
 & u \in C^\al \quad \forall \al<1\\
 & \text{If}\ \div b = 0 ,\ \text{then}\ 
u \in C^\al \ \forall \al<1 . & \\
\hline
& &\\
 & & \text{In general}\quad u \notin L_\infty. \\
p<n & --- & \text{It is also possible (even in the case} \\
& & \div b = 0 ) \ \text{that}\ u \in L_\infty,\ u \notin C . \\
\hline
\end{array}
$$

%%%%%%%%%%%%%%%%%%%%%%%%%%%%%%%%%%%%%%%%%%%%%%%%%%%%%%%%%%%%%%%%%%%%%%%
\section{Proof  of Theorem \ref{t12}}

\subsection{Existence of strong solution}
First, let us consider the Dirichlet problem 
for the Laplace equation in a ball
\begin{equation}
\label{n21}
- \D u = f \text{ in } B_R, \quad 
\left.u\right|_{\dd B_R} = 0 .
\end{equation}
Explicit formulas for the solution together with the Calderon-Zygmund 
estimates of singular integrals imply the well known

\begin{theorem}
\label{t24}
Let $f \in L_q (B_R)$, $1<q<\infty$.
There exists a unique function $u \in W_q^2 (B_R)$ satisfying \eqref{n21},
and $\| u \|_{W_q^2 (B_R)} \le C_1 \|f\|_{L_q (B_R)}$.
\end{theorem}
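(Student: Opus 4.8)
The plan is to produce the solution by the classical explicit formula — the Green function of the ball — and then to read off the estimate from the Calderón–Zygmund theory of singular integrals. Let $\G$ denote the fundamental solution of $-\D$ in $\R^n$, and let $G(x,y)=\G(x-y)-g(x,y)$ be the Green function of $B_R$, where for each fixed $y\in B_R$ the corrector $g(\cdot,y)$ is the harmonic function in $B_R$ with boundary values $\G(x-y)$; for the ball it is given explicitly by $g(x,y)=\G\!\l(\tfrac{|y|}R\bigl(x-\tfrac{R^2}{|y|^2}y\bigr)\r)$ (with the obvious modification at $y=0$), and the reflected point $\tfrac{R^2}{|y|^2}y$ lies outside $\overline{B_R}$. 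Extending $f$ by zero and setting $u(x)=\int_{B_R}G(x,y)f(y)\,dy=w(x)-v(x)$ with $w=\G*f$ (the Newtonian potential) and $v(x)=\int_{B_R}g(x,y)f(y)\,dy$, I would first check, for $f\in C_0^\infty(B_R)$, that $-\D u=f$ in $B_R$ and $u|_{\dd B_R}=0$ (the latter because $g(x,y)=\G(x-y)$ when $x\in\dd B_R$, and $v$ is harmonic since its kernel is); the general case $f\in L_q(B_R)$ then follows by density once the a priori estimate below is available.

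For the Newtonian–potential term this is the easy part: $\dd_i\dd_j w$ equals, up to a bounded multiple of $f$, the singular integral operator with Calderón–Zygmund kernel $\dd_i\dd_j\G$, so $\|\dd_i\dd_j w\|_{L_q(\R^n)}\le C(n,q)\|f\|_{L_q(B_R)}$ for every $q\in(1,\infty)$, while $\|w\|_{L_q(B_R)}+\|\n w\|_{L_q(B_R)}\le C(n,R)\|f\|_{L_q(B_R)}$ by Young's inequality, since $\G$ and $\n\G$ are integrable over $B_{2R}$.

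The corrector term is where the real work lies. Since $v$ is harmonic in $B_R$ and coincides with $w$ on $\dd B_R$, bounding $\|v\|_{W_q^2(B_R)}$ by $\|f\|_{L_q(B_R)}$ is exactly the up-to-the-boundary $W_q^2$–estimate for the Laplacian, which I expect to be the one genuinely nontrivial point (a naive Young-type bound on $\int_{B_R}\dd^2_x g(x,y)f(y)\,dy$ fails near $\dd B_R$). Two standard ways to carry it out: (a) estimate the operator $f\mapsto\dd^2_x v$ directly, using the elementary identity $\tfrac{|y|^2}{R^2}|x-\tfrac{R^2}{|y|^2}y|^2-|x-y|^2=\tfrac{(R^2-|x|^2)(R^2-|y|^2)}{R^2}\ge0$, which controls the kernel $\dd^2_xg(x,y)$ away from the diagonal of $\dd B_R\times\dd B_R$ and reduces its $L_q(B_R)$–boundedness again to the Calderón–Zygmund theorem and Young's inequality for the lower-order parts; or (b) localize near $\dd B_R$, flatten the boundary, extend by odd reflection, and reduce to the singular-integral estimate on a half-space. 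Either way one obtains $\|v\|_{W_q^2(B_R)}\le C(n,q,R)\|f\|_{L_q(B_R)}$, and adding the two contributions gives $\|u\|_{W_q^2(B_R)}\le C_1\|f\|_{L_q(B_R)}$.

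Finally, uniqueness: if $u_1,u_2\in W_q^2(B_R)$ both solve \eqref{n21}, their difference is harmonic in $B_R$ with zero trace in $L_q(\dd B_R)$, hence equals the Poisson integral of its vanishing boundary trace and is therefore identically zero. Together with the construction above this proves the theorem; the restriction $1<q<\infty$ is sharp, because the Calderón–Zygmund estimate fails at the endpoints.
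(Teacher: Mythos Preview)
Your proposal is correct and follows exactly the route the paper indicates: the paper gives no proof at all, stating only that the theorem is ``well known'' and that ``explicit formulas for the solution together with the Calderon--Zygmund estimates of singular integrals'' yield it. Your sketch --- Green-function representation for the ball, Calder\'on--Zygmund for $\dd_i\dd_j(\G*f)$, and a separate (standard) treatment of the harmonic corrector --- is precisely an elaboration of that one-line hint, so there is nothing to compare.
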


Now, let us consider the problem
\begin{equation}
\label{n22}
\begin{cases}
- \D v + b \cdot \n v = f \text{ in } B_R, \\
\left.v\right|_{\dd B_R} = 0 .
\end{cases}
\end{equation}
The following Lemma is also well known, 
we give a proof for the reader convenience.

\begin{lemma}
\label{l22}
Let $n \ge 2$, $1<q<n$.
There exists a positive number $\er_0 (n,q)$ 
such that if $b \in L_n (B_R)$, 
$\|b\|_{L_n (B_R)} \le \er_0$, $f \in L_q (B_R)$,
then there exists a unique function $v \in W_q^2 (B_R)$
satisfying \eqref{n22}.
Moreover, $\| v \|_{W_q^2 (B_R)} \le C \| f \|_{L_q (B_R)}$.
\end{lemma}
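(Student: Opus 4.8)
The plan is to reduce the boundary value problem \eqref{n22} to the pure Laplace problem \eqref{n21} via a Neumann-series / fixed-point argument, using that the first-order term $b\cdot\n v$ is a small perturbation in the relevant norms. Concretely, I would define the solution operator $T\colon L_q(B_R)\to W_q^2(B_R)$ for the Laplacian by Theorem \ref{t24}, so $u=Tf$ solves \eqref{n21} with $\|Tf\|_{W_q^2(B_R)}\le C_1\|f\|_{L_q(B_R)}$. Then \eqref{n22} is equivalent to the equation $v = T(f - b\cdot\n v)$ in $W_q^2(B_R)$, i.e. $v = Tf - T(b\cdot\n v)$. So it suffices to show the linear map $S\colon v\mapsto T(b\cdot\n v)$ is a contraction on $W_q^2(B_R)$ when $\|b\|_{L_n(B_R)}$ is small enough, after which the Banach fixed point theorem gives existence and uniqueness, and the geometric-series bound gives $\|v\|_{W_q^2}\le (1-\text{const})^{-1}C_1\|f\|_{L_q}$.

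The heart of the matter is the estimate $\|b\cdot\n v\|_{L_q(B_R)}\le \er_0\, C_1^{-1}\cdot\tfrac12\,\|v\|_{W_q^2(B_R)}$ (or any fixed fraction less than $1$), which is where the hypothesis $1<q<n$ enters. By H\"older's inequality with exponents $n/q$ and its conjugate, $\|b\cdot\n v\|_{L_q}\le \|b\|_{L_n}\,\|\n v\|_{L_s}$ where $\tfrac1s = \tfrac1q-\tfrac1n$, so $s = \tfrac{nq}{n-q}$ and the point is that $s$ is exactly the Sobolev exponent for which $W_q^1(B_R)\hookrightarrow L_s(B_R)$. Since $v\in W_q^2(B_R)$ we have $\n v\in W_q^1(B_R)$, hence $\|\n v\|_{L_s}\le C(n,q,R)\|\n v\|_{W_q^1}\le C(n,q,R)\|v\|_{W_q^2}$. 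Combining, $\|b\cdot\n v\|_{L_q}\le C(n,q,R)\,\|b\|_{L_n}\,\|v\|_{W_q^2}$, and choosing $\er_0(n,q)$ so that $C_1\cdot C(n,q,R)\,\er_0\le 1/2$ makes $S$ a contraction with constant $1/2$.

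I would then spell out the fixed-point step: on the Banach space $W_q^2(B_R)$ the affine map $\Phi(v)=Tf - Sv$ satisfies $\|\Phi(v_1)-\Phi(v_2)\|_{W_q^2}=\|S(v_1-v_2)\|_{W_q^2}\le\tfrac12\|v_1-v_2\|_{W_q^2}$, so it has a unique fixed point $v$, which by construction lies in $W_q^2(B_R)$ with zero trace (inherited from the range of $T$) and solves $-\D v + b\cdot\n v = f$. Uniqueness of $v$ in $W_q^2(B_R)$ follows because any two solutions have difference $w$ with $w=-Sw$, forcing $w=0$. The quantitative bound follows from $\|v\|_{W_q^2}=\|Tf - Sv\|_{W_q^2}\le C_1\|f\|_{L_q}+\tfrac12\|v\|_{W_q^2}$, hence $\|v\|_{W_q^2}\le 2C_1\|f\|_{L_q}$, giving the claimed constant $C=2C_1$.

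The main obstacle — really the only nontrivial point — is the interplay of exponents guaranteeing that the Sobolev embedding $W_q^1\hookrightarrow L_{nq/(n-q)}$ is available, which is precisely why the restriction $q<n$ is imposed (for $q\ge n$ the target exponent degenerates or the embedding is into $L_\infty$/BMO and the simple H\"older splitting against $b\in L_n$ no longer closes cleanly). One should also be mildly careful that the Sobolev constant $C(n,q,R)$ depends on $R$ (the domain is $B_R$, not all of $\R^n$), but since $\er_0$ is allowed to depend on $n$ and $q$ only while the final constant $C$ may depend on everything, one can either track the $R$-dependence of $\er_0$ or, more cleanly, rescale to $B_1$ first — I would note this and otherwise treat it as routine.
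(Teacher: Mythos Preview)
Your proposal is correct and is essentially the same argument as the paper's: the paper writes it as a Neumann series, setting $v = L_0^{-1}(I + b\cdot\n L_0^{-1})^{-1}f$ with $L_0^{-1}=T$ and showing $\|b\cdot\n L_0^{-1}\|_{L_q\to L_q}\le 1/2$ via the identical H\"older/Sobolev splitting $\|b\cdot\n u\|_{L_q}\le\|b\|_{L_n}\|\n u\|_{L_{nq/(n-q)}}\le C_0\|b\|_{L_n}\|u\|_{W_q^2}$, which is just the operator-theoretic rephrasing of your fixed-point step and yields the same constant $C=2C_1$. Your closing remark about the $R$-dependence of the Sobolev constant (and rescaling to $B_1$) is a fair observation that the paper leaves implicit.
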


\begin{proof}
Denote by $L_0$ the Laplace operator of the Dirichlet problem,
$$
L_0 = - \D : W_q^2 \cap \mathring W_q^1 \to L_q .
$$
The operator $b \cdot \n L_0^{-1}$ is bounded in $L_q (B_R)$.
Indeed, let $f \in L_q (B_R)$,
$u = L_0^{-1} f \in W_q^2 (B_R)$.
Due to the imbedding theorem
$W_q^2 \subset W_{nq/(n-q)}^1$ we have
$$
\|b\cdot \n u\|_{L_q} \le \|b\|_{L_n} \|\n u\|_{L_{nq/(n-q)}}
\le C_0 \|b\|_{L_n} \|u\|_{W_q^2} \le C_0 C_1 \|b\|_{L_n} \| f \|_{L_q} ,
$$
where on the last step we used Theorem \ref{t24}.
If $\er_0 < (2 C_0 C_1)^{-1}$, then
$\|b \cdot \n L_0^{-1}\|_{L_q \to L_q} \le 1/2$.
Now, we set 
$$
v = L_0^{-1} \left(I + b \cdot \n L_0^{-1}\right)^{-1} f .
$$
Clearly,
$$- \D v + b \cdot \n v = f, \quad v \in W_q^2 \cap \mathring W_q^1, \quad
\text{and}\quad
\| v \|_{W_q^2 (B_R)} \le 2 C_1 \| f \|_{L_q (B_R)}, 
$$
as $\left\| \left(I + b \cdot \n L_0^{-1}\right)^{-1}\right\|_{L_q \to L_q} \le 2$.
\end{proof}

\subsection{Spaces $H_1$ and $BMO$}
Let us recall a definition of the Hardy space $H_1 (\R^n)$.
Let $\Phi \in C_0^\infty (B_1)$,
$\int_{B_1} \Phi(x) \, dx = 1$.
For $f \in L_1 (\R^n)$ we set
$$
\left(M_\Phi f\right) (x) = \sup_{t>0} 
\left|\frac1{t^n} \int_{\R^n} \Phi \left(\frac{x-y}t\right) f(y) \, dy \right| ,
$$
and
$$
H_1 (\R^n) = \{ f \in L_1 (\R^n) : M_\Phi f \in L_1 (\R^n) \}, 
\quad \| f \|_{H_1} = \|M_\Phi f\|_{L_1 (\R^n)} .
$$
The space $H_1$ does not depend on the choice of a function $\Phi$,
and the norms constructed with different functions $\Phi$ are equivalent.
A detailed exposition of the theory of Hardy spaces can be found in \cite{Stein}.
The dual space to $H_1$ is the space $BMO (\R^n)$ (Bounded Mean Oscillation).
Its definition read as follows:
a function $f \in L_{1, loc} (\R^n)$ belong to $BMO$ if and only if
$$
\sup_{x\in \R^n} \sup_{R>0} \frac1{\left|B_R(x)\right|}
\int_{B_R(x)} \left|f(y)- f_{B_R(x)}\right| \, dy 
=: \|f\|_{BMO} <\infty .
$$
Here
$f_{B_R(x)} = \frac1{\left|B_R(x)\right|} \int_{B_R(x)} f(y) \, dy$.
The functional $\|\, .\, \|_{BMO}$ is a seminorm
(it vanishes on the constants).
We will use the following result.

\begin{lemma}[\cite{CLMS}, Theorem II.1.2]
\label{l23}
Let $b\in L_p (\R^n)$, $1<p<\infty$, $\div b = 0$, $\ph \in W_{p'}^1 (\R^n)$.
Then $b\cdot \n\ph \in H_1 (\R^n)$,
$$
\|b\cdot \n\ph\|_{H_1} \le C \|b\|_{L_p} \|\n\ph\|_{L_{p'}} .
$$
\end{lemma}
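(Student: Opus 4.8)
The plan is to dominate the maximal function $M_\Phi(b\cdot\n\ph)$ pointwise by a product of two Hardy--Littlewood maximal functions and then close the estimate by H\"older's inequality. Since the $H_1$-norm is, up to equivalence, independent of the choice of $\Phi$, fix $\Phi$ once and for all. I shall use $\div b=0$ twice: first to rewrite $b\cdot\n\ph=\div(\ph\,b)$, and second, after one integration by parts, to note that $\int_{\R^n}(\n\Phi)\!\left(\frac{x-y}{t}\right)\cdot b(y)\,dy=0$ for every $x\in\R^n$ and $t>0$. Integrating by parts in the integral that defines $(M_\Phi f)(x)$ with $f=b\cdot\n\ph\in L_1(\R^n)$ (H\"older), and then subtracting from $\ph$ its mean value $\ph_{B_t(x)}:=|B_t(x)|^{-1}\int_{B_t(x)}\ph$ over the ball $B_t(x)$ — which contains the support of $y\mapsto\Phi((x-y)/t)$, the subtraction being permitted by the second identity above — I obtain
$$
\frac1{t^n}\int_{\R^n}\Phi\!\left(\frac{x-y}{t}\right)(b\cdot\n\ph)(y)\,dy
=\frac1{t^{n+1}}\int_{B_t(x)}(\n\Phi)\!\left(\frac{x-y}{t}\right)\cdot b(y)\,\bigl(\ph(y)-\ph_{B_t(x)}\bigr)\,dy .
$$

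Next I would estimate the right-hand side. Bounding $|\n\Phi|$ by $\|\n\Phi\|_{L_\infty}$, applying H\"older's inequality on $B_t(x)$ with a pair of exponents $(a,a')$ in which $1<a<p$ is to be chosen close to $p$, and then invoking a Sobolev--Poincar\'e inequality to dominate $\|\ph-\ph_{B_t(x)}\|_{L_{a'}(B_t(x))}$ by $C\,t^{1+n/a'-n/m}\|\n\ph\|_{L_m(B_t(x))}$ for a suitable exponent $m<p'$ subject to $\tfrac1m\le\tfrac1{a'}+\tfrac1n$, a direct computation shows that all powers of $t$ cancel and that
$$
\left|\frac1{t^n}\int_{\R^n}\Phi\!\left(\frac{x-y}{t}\right)(b\cdot\n\ph)(y)\,dy\right|
\le C\left(\frac1{|B_t(x)|}\int_{B_t(x)}|\n\ph|^m\right)^{1/m}\left(\frac1{|B_t(x)|}\int_{B_t(x)}|b|^a\right)^{1/a}.
$$
Taking the supremum over $t>0$ and writing $\mathcal M$ for the Hardy--Littlewood maximal operator, $(\mathcal M g)(x)=\sup_{r>0}|B_r(x)|^{-1}\int_{B_r(x)}|g|$, this yields the pointwise bound
$$
M_\Phi(b\cdot\n\ph)(x)\le C\,\bigl(\mathcal M(|\n\ph|^m)(x)\bigr)^{1/m}\,\bigl(\mathcal M(|b|^a)(x)\bigr)^{1/a}.
$$

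Finally I would integrate this inequality over $\R^n$ and apply H\"older's inequality with exponents $p'$ and $p$; the resulting bound is at most $C\,\|\mathcal M(|\n\ph|^m)\|_{L_{p'/m}}^{1/m}\,\|\mathcal M(|b|^a)\|_{L_{p/a}}^{1/a}$. Because $m<p'$ and $a<p$, both exponents $p'/m$ and $p/a$ exceed $1$, so the Hardy--Littlewood maximal theorem applies on $L_{p'/m}$ and on $L_{p/a}$; since $|\n\ph|^m\in L_{p'/m}(\R^n)$ and $|b|^a\in L_{p/a}(\R^n)$, this gives $\|b\cdot\n\ph\|_{H_1}=\|M_\Phi(b\cdot\n\ph)\|_{L_1}\le C\|\n\ph\|_{L_{p'}}\|b\|_{L_p}$, which is the assertion.

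The main obstacle is the bookkeeping of exponents in the middle step: one must take $a$ strictly below $p$, so that $\mathcal M(|b|^a)$ lies in $L_{p/a}$ with $p/a>1$, and simultaneously a gradient exponent $m$ strictly below $p'$, while keeping the Sobolev--Poincar\'e constraint $\tfrac1m\le\tfrac1{a'}+\tfrac1n$ that forces the powers of $t$ to cancel. This is possible because, as $a\uparrow p$, the constraint tends to $\tfrac1m\le\tfrac1{p'}+\tfrac1n$, which still leaves room for $m<p'$; had one been obliged to take $a=p$, $m=p'$, the last step would demand boundedness of $\mathcal M$ on $L_1$, which is false — so the slack $a<p$, $m<p'$ is exactly what makes the argument work. (All the identities and integrations by parts above are legitimate already for $b\in L_p(\R^n)$ with $\div b=0$ and $\ph\in W_{p'}^1(\R^n)$, with no additional smoothness, since everything is tested against the fixed function $\Phi((x-\cdot)/t)\in C_0^\infty(\R^n)$.)
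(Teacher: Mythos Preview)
The paper does not give its own proof of this lemma: it is quoted verbatim as Theorem~II.1.2 of \cite{CLMS} and used as a black box. Your argument is correct and is, in fact, precisely the proof given in \cite{CLMS}: integrate by parts using $\div b=0$, exploit the resulting cancellation to subtract the mean of $\ph$, apply Poincar\'e--Sobolev on the ball to obtain a pointwise bound by a product of Hardy--Littlewood maximal functions with exponents $a<p$ and $m<p'$, and close with H\"older and the maximal theorem. The bookkeeping of exponents you spell out (the need for $a<p$, $m<p'$ with $1/m\le 1/a'+1/n$, which is feasible for $a$ close to $p$) is exactly the point of the CLMS argument, so there is nothing to compare --- your proposal \emph{is} the source proof.
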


Now, we can establish the following estimate.

\begin{lemma}
\label{l24}
Let $n=2$, $b \in L_2 (B_R)$, $\div b = 0$.
Then 
\begin{equation}
\label{23}
\left| \int_{B_R} b \cdot \n\ph \, \psi\, dx \right| \le 
C \|b\|_{L_2 (B_R)} \|\n\ph\|_{L_2 (B_R)} \|\n\psi\|_{L_2 (B_R)} 
\quad \forall \ \ph \in \mathring W_2^1 (B_R), \ \psi \in C_0^\infty (B_R) .
\end{equation}
\end{lemma}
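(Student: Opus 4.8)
The plan is to derive \eqref{23} from the $H_1$--$BMO$ duality together with Lemma \ref{l23}, which forces us first to transport the integral from $B_R$ onto the whole plane. By a routine rescaling we may assume $R=1$ (a dilation $x\mapsto Rx$ leaves both sides of \eqref{23} invariant when $n=2$), so the resulting constant will be universal.

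First I would extend the data to $\R^2$. The function $\ph\in\mathring W_2^1(B_R)$ extends by zero to an element of $W_2^1(\R^2)$ with unchanged gradient norm, and $\psi\in C_0^\infty(B_R)$ extends by zero to $C_0^\infty(\R^2)$. To extend $b$ I would use that $n=2$: since $\div b=0$ in the simply connected ball $B_R$ and $b\in L_2(B_R)$, there is a stream function $\chi\in W_2^1(B_R)$ with $\n\chi=(b_2,-b_1)$, i.e.\ $b=\n^\perp\chi:=(-\dd_2\chi,\dd_1\chi)$; normalizing $\int_{B_R}\chi\,dx=0$ gives $\|\chi\|_{W_2^1(B_R)}\le C\|b\|_{L_2(B_R)}$ by the Poincar\'e inequality. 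Choosing a bounded linear extension $\tilde\chi\in W_2^1(\R^2)$ of $\chi$ and setting $\tilde b:=\n^\perp\tilde\chi$, one gets $\tilde b\in L_2(\R^2)$, $\div\tilde b=0$ on all of $\R^2$, $\tilde b=b$ a.e.\ in $B_R$, and $\|\tilde b\|_{L_2(\R^2)}\le C\|b\|_{L_2(B_R)}$. Since $\psi$ vanishes outside $B_R$, the left-hand side of \eqref{23} equals $\int_{\R^2}\tilde b\cdot\n\ph\,\psi\,dx$.

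Next I would apply Lemma \ref{l23} with $p=p'=2$ to $\tilde b$ and $\ph$, obtaining $\tilde b\cdot\n\ph\in H_1(\R^2)$ with $\|\tilde b\cdot\n\ph\|_{H_1}\le C\|\tilde b\|_{L_2(\R^2)}\|\n\ph\|_{L_2(\R^2)}\le C\|b\|_{L_2(B_R)}\|\n\ph\|_{L_2(B_R)}$, and then pair against $\psi$ by the $H_1$--$BMO$ duality: $\l|\int_{\R^2}(\tilde b\cdot\n\ph)\,\psi\,dx\r|\le C\,\|\tilde b\cdot\n\ph\|_{H_1}\,\|\psi\|_{BMO}$. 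It remains to control $\|\psi\|_{BMO}$, which is the two-dimensional endpoint Sobolev estimate: for any ball $B_\rho(x_0)$, by H\"older and Poincar\'e,
$$
\frac1{|B_\rho(x_0)|}\int_{B_\rho(x_0)}\l|\psi-\psi_{B_\rho(x_0)}\r|\,dx\le\l(\frac1{|B_\rho(x_0)|}\int_{B_\rho(x_0)}\l|\psi-\psi_{B_\rho(x_0)}\r|^2dx\r)^{1/2}\le C\|\n\psi\|_{L_2(B_\rho(x_0))},
$$
because in dimension two the factor $\rho$ produced by Poincar\'e exactly cancels $|B_\rho(x_0)|^{-1/2}$; taking the supremum over all balls gives $\|\psi\|_{BMO}\le C\|\n\psi\|_{L_2(\R^2)}=C\|\n\psi\|_{L_2(B_R)}$. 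Combining the three estimates yields \eqref{23}.

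The main obstacle is the construction in the second paragraph: manufacturing a globally divergence-free $L_2$ extension of $b$ that still agrees with $b$ throughout $B_R$. This is exactly the point where the hypothesis $n=2$ is used a second time, since the stream-function representation of a divergence-free field is special to the plane. Everything after that is bookkeeping together with the three quoted ingredients — Lemma \ref{l23}, the $H_1$--$BMO$ pairing, and the planar Poincar\'e/Sobolev estimate above.
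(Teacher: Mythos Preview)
Your argument is correct and follows essentially the same route as the paper: introduce a stream function for $b$ on the ball, extend it to $W_2^1(\R^2)$ to obtain a globally divergence-free $\tilde b$ with controlled $L_2$ norm, apply Lemma~\ref{l23} with $p=p'=2$ to place $\tilde b\cdot\n\ph$ in $H_1(\R^2)$, and then use the $H_1$--$BMO$ pairing together with the planar Poincar\'e embedding $\|\psi\|_{BMO}\le C\|\n\psi\|_{L_2}$. The only cosmetic differences are your preliminary rescaling to $R=1$ and the explicit zero-mean normalization of the stream function; the paper simply cites the $BMO$ embedding from \cite{Evans} rather than sketching it as you do.
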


\begin{proof}
First, as $\div b = 0$, we can represent the function $b$ as
$(b_1, b_2)= (\dd_2\oo, - \dd_1 \oo)$ with $\oo \in W_2^1 (B_R)$.
We extend the function $\oo$ into the whole plane,
and denote this extension by $\tilde \oo$,
$$
\tilde\oo \in W_2^1 (\R^2), \quad \left. \tilde\oo \right|_{B_R} = \oo, \quad
\|\tilde\oo\|_{W_2^1 (\R^2)} \le C \|\oo\|_{W_2^1 (B_R)} .
$$
Let us define a vector-function 
$\tilde b = (\dd_2\tilde\oo, - \dd_1\tilde\oo)$.
Clearly,
$$
\tilde b \in L_2 (\R^2), \quad \|\tilde b\|_{L_2(\R^2)} \le C \|b\|_{L_2(B_R)},
\quad \left.\tilde b\right|_{B_R} = b, \quad \div \tilde b = 0 .
$$
Therefore, by Lemma \ref{l23}, $\tilde b \cdot \n\ph \in H_1 (\R^2)$ and
$$
\|\tilde b \cdot \n\ph\|_{H_1} \le C \|b\|_{L_2 (B_R)} \|\n\ph\|_{L_2 (B_R)} .
$$
On the other hand, it is well known, that the space $W_2^1(\R^2)$ is imbedded
in $BMO (\R^2)$, and the estimate
$$
\|\psi\|_{BMO(\R^2)} \le C \|\n\psi\|_{L_2(\R^2)}
$$
holds (it is a simple consequence of the Poincar\'e inequality,
see for example \cite{Evans}).

Finally, the integral of a product of an $H_1$-function and a bounded 
$BMO$-function can be estimated by the product of the corresponding norms
(see \cite{Stein}),
$$
\left| \int_{\R^2} \tilde b \cdot \n\ph\, \psi\, dx \right| \le 
C \|\tilde b \cdot \n\ph\|_{H_1} \|\psi\|_{BMO} \le
C \|b\|_{L_2 (B_R)} \|\n\ph\|_{L_2 (B_R)} \|\n\psi\|_{L_2 (B_R)} .\quad
\qedhere
$$
\end{proof}

\begin{rem}
Lemma \ref{l24} is borrowed from the paper \cite{MV}.
In this paper a detailed investigation of the boundedness of the integral
in the left hand side of \eqref{23} under different conditions 
on $b, \ph, \psi$ is done.
We gave the proof of \eqref{23} in our particular case
for the convenience of a reader.
\end{rem}

\subsection{Uniqueness of weak solution}

\begin{lemma}
\label{l25}
Let $b\in L_2(B_R)$, $\div b = 0$.
Then the solution to the problem \eqref{n22}
is unique in the space $\mathring W_2^1 (B_R)$.
\end{lemma}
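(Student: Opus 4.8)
The plan is to establish uniqueness by a standard energy argument, exploiting the divergence-free condition to eliminate the first-order term. Suppose $v_1, v_2 \in \mathring W_2^1(B_R)$ both solve \eqref{n22} with the same right-hand side $f$; set $w = v_1 - v_2 \in \mathring W_2^1(B_R)$. Then $w$ satisfies $-\D w + b\cdot\n w = 0$ in the weak sense. Formally one would like to test this equation against $w$ itself and conclude
$$
\int_{B_R} |\n w|^2\, dx + \int_{B_R} (b\cdot\n w)\, w\, dx = 0 ,
$$
and then argue that the second integral vanishes: since $\div b = 0$, one has $(b\cdot\n w)\,w = \tfrac12\, b\cdot\n(w^2) = \tfrac12\div(b\,w^2)$, whose integral over $B_R$ is zero because $w$ vanishes on the boundary. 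This would give $\n w = 0$, hence $w \equiv 0$ by the Poincar\'e inequality.

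The main obstacle is that this argument is not directly legal: the integral identity for \eqref{11}/\eqref{n22} is a priori only available for test functions $h \in C_0^\infty(B_R)$, whereas we want to insert $h = w$, which lies only in $\mathring W_2^1(B_R)$; and the term $\int b\cdot\n w\, w$ is only known to be finite a priori because $b \in L_2$ and $w \in W_2^1 \subset L_q$ for all $q<\infty$ in dimension $n=2$ — but controlling it requires care. This is precisely where Lemma \ref{l24} enters. By that lemma, the trilinear form $(\ph,\psi)\mapsto \int_{B_R} b\cdot\n\ph\,\psi\,dx$ extends by continuity from $\ph\in\mathring W_2^1$, $\psi\in C_0^\infty$ to a bounded form on $\mathring W_2^1(B_R)\times\mathring W_2^1(B_R)$, with bound $C\|b\|_{L_2}\|\n\ph\|_{L_2}\|\n\psi\|_{L_2}$. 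Consequently both the weak formulation of the equation and the identity $\int b\cdot\n\ph\,\psi\,dx + \int b\cdot\n\psi\,\ph\,dx = 0$ (valid for $\ph,\psi\in C_0^\infty$ since $\div b=0$, by integration by parts) extend to all $\ph,\psi\in\mathring W_2^1(B_R)$ by density.

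Carrying this out: first, approximate $w$ in $\mathring W_2^1(B_R)$ by $h_k \in C_0^\infty(B_R)$. Testing the equation for $w$ against each $h_k$ gives $\int \n w\cdot\n h_k + \int b\cdot\n w\, h_k = 0$; passing to the limit using the continuity of the trilinear form from Lemma \ref{l24} (in the second factor) yields $\int |\n w|^2\, dx + \int b\cdot\n w\, w\, dx = 0$. Next, the skew-symmetry identity $\int b\cdot\n\ph\,\psi\,dx = -\int b\cdot\n\psi\,\ph\,dx$, established on $C_0^\infty\times C_0^\infty$ via $\div b = 0$ and extended to $\mathring W_2^1\times\mathring W_2^1$ by Lemma \ref{l24}, gives with $\ph=\psi=w$ that $\int b\cdot\n w\, w\,dx = 0$. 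Therefore $\int_{B_R}|\n w|^2\,dx = 0$, so $\n w = 0$, and since $w\in\mathring W_2^1(B_R)$ the Poincar\'e inequality forces $w\equiv 0$. This proves $v_1 = v_2$.
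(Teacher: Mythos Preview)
Your proof is correct and follows essentially the same route as the paper's: approximate $w$ by $\psi_k\in C_0^\infty(B_R)$, use Lemma~\ref{l24} to control the drift term, and exploit $\div b=0$ via $\int b\cdot\nabla\psi_k\,\psi_k=0$ to conclude $\|\nabla w\|_{L_2}=0$. The only cosmetic difference is that the paper avoids ever writing the (a priori not Lebesgue-integrable) expression $\int b\cdot\nabla w\,w$: instead of extending the bilinear form abstractly, it rewrites $\int\nabla w\cdot\nabla\psi_k=-\int b\cdot\nabla w\,\psi_k=\int b\cdot\nabla(\psi_k-w)\,\psi_k$ and applies Lemma~\ref{l24} directly to the last integral, which involves only $\psi_k-w\in\mathring W_2^1$ and $\psi_k\in C_0^\infty$.
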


\begin{proof}
Let $u$ solve the homogeneous problem
\begin{equation}
\label{24}
- \D u + b \cdot \n u = 0, \quad u \in \mathring W_2^1 (B_R) .
\end{equation}
Choose a sequence $\psi_n \in C_0 ^\infty (B_R)$ such that
$\psi_n \to u$ in $W_2^1 (B_R)$.
Then
$$
\int_{B_R} |\n u|^2 dx \le \int_{B_R} \n u \cdot \n \psi_n dx + 
\|\n u\|_{L_2(B_R)} \|\n u - \n \psi_n\|_{L_2(B_R)} .
$$
The second term tends to 0 when $n \to \infty$. 
For the first term we have
$$
\int_{B_R} \n u \cdot \n \psi_n dx = 
- \int_{B_R} b \cdot \n u \psi_n dx = 
\int_{B_R} b \cdot \n (\psi_n - u) \psi_n dx ,
$$
where we used \eqref{24} and the equality 
$\int_{B_R} b \cdot \n \psi_n \psi_n dx = 0$ 
which is due to the divergence-free condition.
By virtue of Lemma \ref{l24},
$$
\left| \int_{B_R} b \cdot \n (\psi_n - u) \psi_n dx \right| \le
C \|b\|_{L_2(B_R)} \|\n \psi_n - \n u\|_{L_2(B_R)} \|\n \psi_n\|_{L_2(B_R)}
\underset{n\to \infty}{\longrightarrow} 0 .
$$
So,
$\|\n u \|_{L_2(B_R)}^2 = 0$, and $u \equiv 0$.
\end{proof}

\begin{rem}
Example 1 shows that the uniqueness of weak solution can be violated
in the case $\div b \neq 0$.
\end{rem}

\subsection{Proof of Theorem \ref{t12}.}
The statement of the Theorem is local.
Therefore, without loss of generality, we can assume that the norm
$\|b\|_{L_2(B_R)}$ is arbitrarily small.
Let $u \in W_2^1 (B_R)$ be a solution to the equation \eqref{11},
and let $\ze \in C_0^\infty (B_R)$, 
$\left.\ze\right|_{B_r} \equiv 1$.
Then
$$
- \D (\ze u) + b \cdot \n (\ze u) = 
- \D \ze u - 2 \n \ze \cdot \n u + 
b \cdot \n \ze u \in L_q (B_R) \quad \forall \ q <2 .
$$
Thus, the function $(\ze u)$ solves the problem \eqref{n22} with 
the right hand side in $L_q$.
By virtue of Lemma \ref{l22}, such a problem has a solution
from $W_q^2 (B_R)$.
On the other hand, the solution is unique due to Lemma \ref{l25}.
So, $u \in W_q^2 (B_r)$ for all $q<2$.
\qed

\begin{rem}
\label{r26}
Proof of Theorem \ref{t2} can be done similarly.
The existence of strong solution is due to Lemma \ref{l22}.
The uniqueness of weak solution is given by

\begin{lemma}
\label{l27}
Let $n \ge 3$.
There is a number $\er_1 = \er_1 (n)$ such that 
a solution to the problem \ref{n22} is unique in $\mathring W_2^1 (B_R)$ 
if $b \in L_n (B_R)$, $\|b\|_{L_n(B_R)} \le \er_1$.
\end{lemma}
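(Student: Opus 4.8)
The plan is to follow the proof of Lemma~\ref{l25}, but with a decisive simplification available in dimension $n\ge3$: the Sobolev embedding $W_2^1\subset L_{2n/(n-2)}$ alone closes the estimate, so neither the divergence-free hypothesis nor the $H_1$--$BMO$ duality of Lemma~\ref{l24} is needed. As usual it suffices to treat the homogeneous problem: if $u\in\mathring W_2^1(B_R)$ satisfies $-\D u + b\cdot\n u = 0$ in the weak sense, I want to conclude $u\equiv0$, and the general statement then follows by subtracting two solutions of~\eqref{n22}. I would pick $\psi_m\in C_0^\infty(B_R)$ with $\psi_m\to u$ in $W_2^1(B_R)$ and write, exactly as in Lemma~\ref{l25},
$$
\int_{B_R}|\n u|^2\,dx \;\le\; \int_{B_R}\n u\cdot\n\psi_m\,dx \;+\; \|\n u\|_{L_2(B_R)}\,\|\n u-\n\psi_m\|_{L_2(B_R)},
$$
using the integral identity to replace $\int_{B_R}\n u\cdot\n\psi_m\,dx$ by $-\int_{B_R}(b\cdot\n u)\,\psi_m\,dx$.

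The only substantive step is estimating this trilinear term. Since $\tfrac1n+\tfrac12+\tfrac{n-2}{2n}=1$, Hölder's inequality together with the Sobolev inequality $\|\psi_m\|_{L_{2n/(n-2)}(B_R)}\le C(n)\|\n\psi_m\|_{L_2(B_R)}$ (valid for $n\ge3$, scale invariant after extension by zero to $\R^n$, hence with $C$ depending on $n$ only) gives
$$
\left|\int_{B_R}(b\cdot\n u)\,\psi_m\,dx\right| \;\le\; \|b\|_{L_n(B_R)}\,\|\n u\|_{L_2(B_R)}\,\|\psi_m\|_{L_{2n/(n-2)}(B_R)} \;\le\; C(n)\,\|b\|_{L_n(B_R)}\,\|\n u\|_{L_2(B_R)}\,\|\n\psi_m\|_{L_2(B_R)}.
$$
Letting $m\to\infty$, the error term vanishes and $\|\n\psi_m\|_{L_2}\to\|\n u\|_{L_2}$, so $\|\n u\|_{L_2(B_R)}^2\le C(n)\,\|b\|_{L_n(B_R)}\,\|\n u\|_{L_2(B_R)}^2$. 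It then suffices to take $\er_1(n)=\tfrac1{2C(n)}$: this forces $\|\n u\|_{L_2(B_R)}=0$, hence $u\equiv0$.

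I do not anticipate a genuine obstacle here; the argument is shorter than that of Lemma~\ref{l25}. The two points deserving a word of care are the passage $m\to\infty$ (copied verbatim from Lemma~\ref{l25}) and the remark that the Sobolev constant can be taken purely dimensional — for which one extends $\psi_m$ by zero to $\R^n$ and uses the scale-invariant inequality there; alternatively, since Theorem~\ref{t2} is local and $R$ is fixed, any $R$-dependence could simply be absorbed into $\er_1$. Finally, it is worth emphasizing the contrast with $n=2$: in dimension $n\ge3$ we do not need $\div b=0$, because the borderline pair $L_{2n/(n-2)}$ and $L_n$ already suffices — which is exactly why Theorem~\ref{t2} holds with no structural assumption on $b$.
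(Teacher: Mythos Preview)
Your argument is correct and follows the same approach as the paper: H\"older with exponents $n,2,\tfrac{2n}{n-2}$ plus the Sobolev embedding $\mathring W_2^1\subset L_{2n/(n-2)}$ to absorb the drift term. The paper's version is slightly shorter because it tests the equation directly with $u$ (the trilinear integrand $b\cdot\nabla u\,u$ lies in $L_1$ by that same H\"older estimate, so density extends the identity immediately), whereas you spell out the approximation $\psi_m\to u$ --- unnecessary here, unlike in Lemma~\ref{l25}, but certainly not wrong.
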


\begin{proof}
Let $u$ be a solution to the problem \eqref{n22} with $f = 0$.
Using the H\" older inequality and the imbedding Theorem 
$W_2^1 \subset L_{2n/(n-2)}$ we have
$$
\int_{B_R} |\n u|^2 dx = - \int_{B_R} b \cdot \n u u \, dx 
\le \|b\|_{L_n(B_R)} \|\n u\|_{L_2(B_R)} \|u\|_{L_{\frac{2n}{n-2}}(B_R)} 
\le C_0 \|b\|_{L_n(B_R)} \|\n u\|_{L_2(B_R)}^2 .
$$
If $\er_1 < 1/C_0$, then $\|\n u\|_{L_2(B_R)} = 0$.
\end{proof}

Now, multiplying a solution to the equation \eqref{11} by a cut-off function,
we get the relation
$$
u \in W_q^1 (B_R), \ 2\le q < n \quad \Longrightarrow \quad 
u \in W_q^2 (B_r) \subset W_{\frac{nq}{n-q}}^1 (B_r), \quad \forall\ r < R .
$$
Iterating this relation $\left[\frac{n+1}2\right]$ times
we obtain $u\in W_q^2 (B_r)$ for all $q<n$ and $r<R$.
\end{rem}

%%%%%%%%%%%%%%%%%%%%%%%%%%%%%%%%%%%%%%%%%%%%%%%%%%%%%%%%%%%%%%%%%%
\section{Proof of Theorem \ref{t4}}

The proof of Theorem \ref{t6} (with $p=1$) in \cite{SSSZ} 
is based on the theory of the stochastic processes. 
We prove Theorem \ref{t4} and Theorem \ref{t6}
following the general scheme of \cite{SSSZ},
but without using the probability theory.

\subsection{Coefficient $b$}
Let $n \ge 3$, let $\O$ be a cylinder in $\R^n$,
$$
\O = \{ x \in \R^n : \rho < 1, z \in (-1, 1) \},
$$
where
$\rho = \sqrt{x_1^2 + \dots + x_{n-1}^2}$, $z = x_n$.
We will use the auxiliary parameters
$\mu \in (1, 2)$, $\er \in (0, 1/2)$ and a function
$\eta \in C^\infty (\R)$, 
$\eta (t) = 0$ if $t \le 1/2$,
$\eta (t) = 1$ if $t \ge 1$.
Introduce the function
\begin{equation}
\label{20}
H_\er (x) = \rho^{n-1} z^{-\mu} \eta (z/\er) \eta (z/\rho)
\end{equation}
if $x_n \ge 0$, and 
$H_\er (x_1, \dots, x_{n-1}, x_n) =
- H_\er (x_1, \dots, x_{n-1}, -x_n)$ if $x_n < 0$.
It is clear that $H_\er \in C^\infty (\overline\O)$
if the dimension $n$ is odd, and 
$\rho^{-1} H_\er \in C^\infty (\overline\O)$
if $n$ is even.
We define the function $b_\er$ as follows
$$
b_\er (x) = K \rho^{1-n}
\left( x_1 \dd_z H_\er, \dots, x_{n-1} \dd_z H_\er, - \rho \dd_\rho H_\er \right) .
$$
In cylindrical coordinates it means that
\begin{equation}
\label{205}
(b_\er)_\rho = K \rho^{2-n} \dd_z H_\er, \quad
(b_\er)_z = - K \rho^{2-n} \dd_\rho H_\er,
\end{equation}
and all other components are zero.
Here $K$ is a large constant, which we choose later
(see Lemma \ref{l41} below);
it does not depend on $\er$.

\begin{lemma}
\label{l21}
The function $b_\er$ possesses the following properties:
\begin{itemize}
\item $b_\er \in C^\infty (\overline\O)$;
\item $\div b_\er = 0$;
\item we have
$$
(b_\er)_\rho = - \mu K \rho z^{-1-\mu}, \quad
(b_\er)_z = - (n-1) K z^{-\mu}
$$
on the set
\begin{equation}
\label{21}
\O_\er : = \{ x \in \O :\rho < z, \er < z <1 \}
\end{equation}
(it is a truncated cone in the upper half of the cylinder $\O$);
\item $b_\er \in L_p (\O)$ for $p < n/\mu$,
and the norms $\| b_\er \|_{L_p}$ are uniformly bounded with respect to $\er$.
\end{itemize}
\end{lemma}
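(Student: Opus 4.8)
The plan is to extract all four properties from the ``stream-function'' structure \eqref{205}: the divergence-free condition will then be automatic, the explicit form on $\O_\er$ a one-line substitution, and the $L_p$-bound a pointwise estimate integrated over a cone. The one step I expect to need genuine care is the smoothness of $b_\er$ up to and including the axis $\{\rho=0\}$, because the Cartesian formula for $b_\er$ carries the singular-looking prefactor $\rho^{1-n}$; I would dispose of that first.

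\emph{Smoothness.} I would argue region by region. Since $\eta(t)=0$ for $t\le\tfrac12$, the factor $\eta(z/\er)$ makes $H_\er$, and hence $b_\er$, vanish identically on the slab $\{|z|<\er/2\}$; in particular the odd reflection of $b_\er$ across $\{z=0\}$ is $C^\infty$ there. On the region where $\rho$ and $|z|$ are both bounded away from $0$ the components of $b_\er$ are smooth functions of $(\rho,z)$ with $\rho>0$, hence smooth in $x$. In a neighbourhood of the axis I would use that, for $\rho<\er/4$, one has $\eta(z/\rho)=1$ wherever $\eta(z/\er)\neq0$, so $H_\er=\rho^{n-1}z^{-\mu}\eta(z/\er)$ on the whole set $\{\rho<\er/4\}$; substituting this into
$$b_\er=K\rho^{1-n}\bigl(x_1\dd_z H_\er,\dots,x_{n-1}\dd_z H_\er,\,-\rho\,\dd_\rho H_\er\bigr)$$
the powers $\rho^{1-n}$ and $\rho^{n-1}$ cancel, leaving $(b_\er)_i=Kx_i\,\dd_z\bigl(z^{-\mu}\eta(z/\er)\bigr)$ for $i<n$ and $(b_\er)_n=-(n-1)K\,z^{-\mu}\eta(z/\er)$, which are polynomials in $x$ with coefficients smooth in $z$. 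This yields $b_\er\in C^\infty(\overline\O)$; it is the same cancellation that makes $\rho^{-1}H_\er$ smooth when $n$ is even.

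For $\div b_\er=0$ I would compute off the axis, where $\dd_{x_i}\rho=x_i/\rho$ and $\sum_{i<n}x_i^2=\rho^2$ give
$$\div b_\er=\sum_{i=1}^{n-1}\dd_{x_i}\bigl(K\rho^{1-n}x_i\dd_z H_\er\bigr)+\dd_z\bigl(-K\rho^{2-n}\dd_\rho H_\er\bigr)=K\rho^{2-n}\bigl(\dd_\rho\dd_z H_\er-\dd_z\dd_\rho H_\er\bigr)=0,$$
and by the smoothness just established this identity holds throughout $\O$. On the open cone $\O_\er$ of \eqref{21} one has $z/\er>1$ and $z/\rho>1$, so $\eta(z/\er)=\eta(z/\rho)=1$ and $H_\er=\rho^{n-1}z^{-\mu}$; then $\dd_z H_\er=-\mu\rho^{n-1}z^{-1-\mu}$ and $\dd_\rho H_\er=(n-1)\rho^{n-2}z^{-\mu}$, and \eqref{205} gives exactly $(b_\er)_\rho=-\mu K\rho z^{-1-\mu}$, $(b_\er)_z=-(n-1)K z^{-\mu}$.

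For the $L_p$-bound I would first note that, for $z>0$, the support of $b_\er$ is contained in $\{z>\er/2,\ \rho<2z\}$: every term arising from differentiating $H_\er$ carries a factor $\eta(z/\rho)$ or $\eta'(z/\rho)$ (vanishing unless $\rho<2z$) and a factor $\eta(z/\er)$ or $\eta'(z/\er)$ (vanishing unless $z>\er/2$). Expanding $\dd_z H_\er$ and $\dd_\rho H_\er$ by the product rule, bounding $|\eta|,|\eta'|\le C$, using $\rho<2z$ on the support, and observing that on the support of $\eta'(z/\er)$ one moreover has $\rho<2z<2\er$, so that the dangerous factor $\rho/\er$ there is $\le2$, I expect the pointwise estimate $|b_\er(x)|\le C(n,\mu,K)\,|z|^{-\mu}$ on $\O$ with $C$ independent of $\er$. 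Then
$$\int_\O|b_\er|^p\,dx\le C^p\int_{\O\cap\{\rho<2|z|\}}|z|^{-\mu p}\,dx\le C'\int_0^1 z^{\,n-1-\mu p}\,dz,$$
which is finite precisely when $p<n/\mu$, uniformly in $\er\in(0,\tfrac12)$, as asserted. Everything here but the axis smoothness is a substitution or a product-rule estimate.
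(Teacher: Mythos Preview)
Your proof is correct and follows essentially the same approach as the paper's: both derive the pointwise estimate $|b_\er(x)|\le CK\,z^{-\mu}$ on the conical support $\{\rho<2z\}$ and integrate in cylindrical coordinates to obtain the uniform $L_p$ bound for $p<n/\mu$. You supply considerably more detail on the first three items (in particular the cancellation of $\rho^{1-n}$ near the axis and the divergence computation), whereas the paper simply declares that these ``follow directly from the construction''.
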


\begin{proof}
The first three properties follows directly from the construction.
Let us verify the last one.
For postitve $z$ we have
$$
\left| \n H_\er (x) \right| \le C \rho^{n-1} z^{-\mu}
\left( \frac1\rho + \frac1 z + 
\frac1\er \chi_{[1/2, 1]} \left(\frac z\er\right) +
\frac z{\rho^2} \chi_{[1/2, 1]} \left(\frac z\rho\right) \right)
\chi_{[1/2, \infty)} \left(\frac z\rho\right) ,
$$
where $\chi_{[1/2, 1]}$ and $\chi_{[1/2, \infty)}$ are the characteristic
functions of the interval $[1/2, 1]$ and $[1/2, \infty)$ respectively.
Next,
$$
\frac1\er \chi_{[1/2, 1]} \left(\frac z\er\right)  \le \frac1 z, \quad
\frac z{\rho^2} \chi_{[1/2, 1]} \left(\frac z\rho\right) \le \frac1\rho,
\quad \text{and} \quad
\frac1 z \chi_{[1/2, \infty)} \left(\frac z\rho\right) 
\le \frac2\rho \chi_{[1/2, \infty)} \left(\frac z\rho\right)  .
$$
Therefore,
$$
\left| \n H_\er (x) \right| \le C \rho^{n-2} z^{-\mu}
\chi_{[1/2, \infty)} \left(\frac z\rho\right)
$$
and
\begin{equation}
\label{22}
\left| b_\er (x) \right| \le C K z^{-\mu}
\chi_{[1/2, \infty)} \left(\frac z\rho\right) ,
\end{equation}
where the constant $C$ depends on the function $\eta$ only
and does not depend on $\er$.
The last inequality implies
$$
\int_\O |b_\er(x)|^p dx \le 
C K^p \int_0^1 \rho^{n-2} d\rho \int_{\rho/2}^\infty z^{-\mu p} dz < \infty ,
$$
because $n - \mu p > 0$.
\end{proof}

\subsection{Auxiliary function $f$}

\begin{lemma}
\label{l31}
There exists a function $f \equiv f_\er \in C^2 [\er, 1]$
which possesses the following properties 

1) $f(z)\ge 0$, $f'(z)\ge 0$;

2)$f(\er) = 0$, $f(2\er) \ge c_1 >0$, $f(1) = 1$;

3) $f(z) \le c_2 f'(z) z^{2-\mu}$,
$- f''(z) \le c_3 f'(z) z^{-\mu}$.

Here the positive constants $c_1$, $c_2$, $c_3$ depend on $\mu$ 
and do not depend on $\er$.
\end{lemma}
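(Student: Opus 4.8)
The plan is to write $f$ down explicitly. Fix constants $c_2,\la>0$ depending only on $\mu$ (concretely $c_2=\la=1$ works), set
\[
\psi(z)=\frac{z^{\mu-1}}{c_2(\mu-1)}-\frac{\la\er}{z-\er},
\]
and define $f(z)=f_\er(z)=\exp\big(\psi(z)-\psi(1)\big)$, so $f(1)=1$. The value $\psi(1)=\frac1{c_2(\mu-1)}-\frac{\la\er}{1-\er}$ stays in a fixed bounded interval as $\er$ runs over $(0,1/2)$. The first term of $\psi$ is there only to guarantee $\psi'(z)\ge c_2^{-1}z^{\mu-2}$ on all of $(\er,1]$, which is exactly the content of the first inequality in 3) and would fail near $z=1$ without it. The second term $-\la\er/(z-\er)$ is what forces the vanishing $f(\er)=0$; the point of using this particular singularity rather than, say, $\log(z-\er)$ is that the factor $e^{-\la\er/(z-\er)}$ \emph{saturates}: for $z\ge 2\er$ it is bounded below by $e^{-\la}$, independently of $\er$, and this is precisely what yields the uniform lower bound $f(2\er)\ge c_1$. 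Finally, $e^{-\la\er/(z-\er)}$ extends to a $C^\infty$ function on $[\er,1]$ that is flat at $z=\er$, and the remaining factor is smooth on $[\er,1]$, so in fact $f\in C^\infty[\er,1]$ with $f(\er)=0$, which is more than the required $C^2$.

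With this $f$ most assertions are immediate. Since $\psi'(z)=c_2^{-1}z^{\mu-2}+\la\er(z-\er)^{-2}>0$ on $(\er,1]$ and $f>0$ there with $f(\er)=0$, property 1) and the endpoint values $f(\er)=0$, $f(1)=1$ hold. Moreover $f(2\er)=\exp\big(\psi(2\er)-\psi(1)\big)$ and
\[
\psi(2\er)-\psi(1)=\frac{(2\er)^{\mu-1}-1}{c_2(\mu-1)}-\la+\frac{\la\er}{1-\er}\ \ge\ -\frac1{c_2(\mu-1)}-\la ,
\]
because $(2\er)^{\mu-1}>0$ and $\la\er/(1-\er)\ge0$; hence $f(2\er)\ge c_1:=e^{-1/(c_2(\mu-1))-\la}>0$, a number independent of $\er$. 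The first inequality in 3), $f\le c_2 f'z^{2-\mu}$, reads $1\le c_2\psi'(z)z^{2-\mu}$ and follows from $\psi'(z)\ge c_2^{-1}z^{\mu-2}$.

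The only genuinely delicate point is the second inequality in 3). Since $f>0$ on $(\er,1]$ and $f''=\big(\psi''+(\psi')^2\big)f$, it is equivalent to
\[
-\psi''(z)\ \le\ (\psi'(z))^2+c_3\,\psi'(z)\,z^{-\mu},\qquad \er<z\le1,
\]
both sides extending by $0$ to $z=\er$. Here
\[
-\psi''(z)=\frac{(2-\mu)\,z^{\mu-3}}{c_2}+\frac{2\la\er}{(z-\er)^3},
\]
and the obstacle is the second summand, which is large when $z$ is comparable to $\er$. I would bound it by a case split in $z-\er$. If $z-\er\le\frac{\la\er}{2}$, then $\frac{2\la\er}{(z-\er)^3}\le\frac{\la^2\er^2}{(z-\er)^4}\le(\psi'(z))^2$, so it is absorbed by the first term on the right. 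If $z-\er\ge\frac{\la\er}{2}$, then — splitting once more at $z=2\er$ and using $z-\er\ge z/2$, $\er\le z/2$ when $z\ge2\er$ — one gets $\frac{2\la\er}{(z-\er)^3}\le C(\mu)\,z^{-2}$, while $c_3\,\psi'(z)\,z^{-\mu}\ge c_2^{-1}c_3\,z^{-2}$, so it is absorbed as soon as $c_3$ is large enough. Finally the remaining summand $\frac{(2-\mu)z^{\mu-3}}{c_2}$ is, for every $z\le1$, at most $\frac{c_3}{c_2}z^{-2}$, since $(2-\mu)z^{\mu-1}\le2-\mu<1\le c_3$. Collecting these estimates and choosing $c_3=c_3(\mu)$ large enough finishes the verification; the constants $c_1,c_2,c_3$ produced this way depend only on $\mu$, not on $\er$. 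I expect this last inequality — controlling the large term of $-\psi''$ near $z=\er$ without upsetting the uniform lower bound on $f(2\er)$ — to be the main obstacle; the rest is bookkeeping.
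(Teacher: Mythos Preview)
Your construction is correct and takes a genuinely different route from the paper's. The paper builds $f$ piecewise: it sets up an auxiliary $h\in C^1[\er,1]$ equal to a quadratic on $[\er,2\er]$ and to a constant multiple of $t^{\mu-2}$ on $(2\er,1]$, with coefficients tuned so that $h$ and $h'$ match at $2\er$; then $g(z)=\int_\er^z h$ and $f=g/g(1)$, and the inequalities in 1)--3) are checked by direct calculation, separately on the two pieces. Your approach instead writes $f$ as a single exponential $f=e^{\psi-\psi(1)}$, where the term $-\la\er/(z-\er)$ in $\psi$ simultaneously forces $f(\er)=0$ (via the essential singularity) and $f(2\er)\ge c_1$ (via saturation), while the power term $z^{\mu-1}/(c_2(\mu-1))$ guarantees $\psi'\ge c_2^{-1}z^{\mu-2}$ globally. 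The gain is $C^\infty$ regularity for free and a clean reduction of 3) to the differential inequalities on $\psi'$, $\psi''$; the price is the case analysis you carry out for $-\psi''\le(\psi')^2+c_3\psi' z^{-\mu}$, which the paper sidesteps by its piecewise ansatz.

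One small remark on that case analysis: with your choice $\la=1$ the first split lands at $z=3\er/2$, so the second case still contains the sliver $3\er/2<z<2\er$, which you allude to (``splitting once more at $z=2\er$'') but do not write out. It is harmless --- in that range $z$, $\er$, and $z-\er$ are all within fixed ratios of one another, so $2\la\er/(z-\er)^3\le C\,z^{-2}$ holds just as for $z\ge 2\er$ --- but if you want to avoid the extra sub-case entirely, take $\la=2$, which makes the two splits coincide.
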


\begin{rem}
Such a function can not exist when $\mu = 1$.
Indeed, the conditions
$$
f'(z) \ge 0, \quad f(2\er) \ge c_1 \quad \text{and} 
\quad f(z) \le c_2 f'(z) z
$$
imply that $f'(z) \ge c_1 c_2^{-1} z^{-1}$ when $z \ge 2\er$.
Therefore, 
$$
1-c_1 \ge f(1) - f(2\er) \ge c \int_{2\er}^1 \frac{dz}z = c |\ln 2 \er| ,
$$
and we have a contradiction.
\end{rem}

{\it Proof of Lemma \ref{l31}.}
First, we define the function
\begin{equation*}
h(t) = 
\begin{cases}
\frac12 \left(\er^{-3} - \er^{\mu-4}\right) t^2 -
\left(2 \er^{-2} - \er^{\mu-3}\right) t +
\left(2\er^{-1} + \frac2{2-\mu} \er^{\mu-2}\right), \ \ \er \le t \le 2\er, \\
\frac{2^{3-\mu}}{2-\mu} \, t^{\mu-2}, \ \ 2\er < t \le 1 .
\end{cases}
\end{equation*}
Its derivative
\begin{equation*}
h'(t) = 
\begin{cases}
\left(\er^{-3} - \er^{\mu-4}\right) t - 2 \er^{-2} + \er^{\mu-3},
\ \ \er \le t \le 2\er, \\
- 2^{3-\mu} t^{\mu - 3}, \ \ 2\er < t \le 1 ,
\end{cases}
\end{equation*}
is continuous and negative everywhere. 
Therefore, the function $h \in C^1 [\er, 1]$ is decreasing.

Put $g(z)= \int_\er^z h(t)\, dt$.
The function $g$ increases, $g \in C^2 [\er, 1]$ and $g(\er) = 0$.
We have
\begin{eqnarray*}
g (2\er) = \left(\er^{-3} - \er^{\mu-4}\right) \frac{7\er^3} 6 
- \left(2 \er^{-2} - \er^{\mu-3}\right) \frac{3\er^2} 2 
+ \left(2\er^{-1} + \frac2{2-\mu} \er^{\mu-2}\right) \er \\
= \frac16 + \left(\frac13 + \frac2{2-\mu}\right) \er^{\mu-1} > \frac16 ,
\end{eqnarray*}
and
\begin{eqnarray*}
g (1) = g (2\er) + \int_{2\er}^1 h(t) \, dt =
g (2\er) + \frac{2^{3-\mu}}{(2-\mu)(\mu-1)} \left(1 - (2\er)^{\mu-1}\right) \\
< \frac16 + \frac{2^{3-\mu}}{(2-\mu)(\mu-1)} + \frac{\er^{\mu-1}} 3
< \frac12 + \frac{2^{3-\mu}}{(2-\mu)(\mu-1)} =: d_\mu .
\end{eqnarray*}

Now, we define the function $f$ as $f(z) = g(z) / g(1)$.
It is immediate that the properties 1) and 2) are fulfilled;
one can take $c_1 = (6 d_\mu)^{-1}$.
Let us verify the property 3).
It is sufficient to check the corresponding inequalities
for the function $g$ instead of function $f$.
For $z\le 2\er$ we have
$$
g'(z) = h(z) \ge h(2\er) = \frac2{2-\mu} \er^{\mu-2}, \quad
g(z) \le g(1) < d_\mu \le C g'(z) z^{2-\mu},
$$
where 
$C = (2-\mu) d_\mu /2$.
Further,
$$
g'(z) z^{-\mu} \ge \frac2{2-\mu} \er^{\mu-2} (2\er)^{-\mu} 
= \frac{2^{1-\mu}}{2-\mu} \er^{-2}, \quad
g''(z)= h'(z)  \ge h'(\er) = - \er^2 ,
$$
therefore,
$$ 
- g''(z) \le (2-\mu) 2^{\mu-1} g'(z) z^{-\mu} .
$$

For $z > 2\er$ we have
$$
g'(z)= \frac{2^{3-\mu}}{2-\mu} z^{\mu-2} \quad \Longrightarrow \quad
g(z) \le C g'(z) z^{2-\mu},
$$ 
$C = (2-\mu) 2^{\mu-3} d_\mu$.
Finally,
$$
- g''(z) = (2-\mu) g'(z) \le (2-\mu) g'(z) z^{-\mu} . \quad \qed
$$

\subsection{Barrier function $v$}
\label{s4}
Let $f = f_\er$ be a function constructed in Lemma \ref{l31}.
Consider the function 
$v_\er (z) = f(z) \cos \frac{\pi\rho}{2z}$
on the set $\O_\er$ defined by \eqref{21}.
Clearly, $v_\er \in C^2 (\overline{\O_\er})$, 
\begin{equation}
\label{345}
v_\er \ge 0 \ \text{in} \ \O_\er, \quad
\left. v_\er\right|_{z=\er} = 0, \quad 
\left. v_\er\right|_{z=\rho} = 0, \quad 
\left. v_\er\right|_{z=1} = \cos \frac{\pi\rho}2 ,
\end{equation}
and
\begin{eqnarray*}
\dd_\rho v_\er = - \frac\pi{2z} f(z) \sin \frac{\pi\rho}{2z}, \quad
\dd_\rho^2 v_\er = - \frac{\pi^2}{4z^2} f(z) \cos \frac{\pi\rho}{2z}, \\
\dd_z v_\er = f'(z) \cos \frac{\pi\rho}{2z} +
\frac{\pi\rho}{2z^2} f(z) \sin \frac{\pi\rho}{2z}, \\
\dd_z^2 v_\er = f''(z) \cos \frac{\pi\rho}{2z} +
\frac{\pi\rho}{z^2} f'(z) \sin \frac{\pi\rho}{2z} -
\frac{\pi\rho}{z^3} f(z) \sin \frac{\pi\rho}{2z} -
\frac{\pi^2 \rho^2}{4z^4} f(z) \cos \frac{\pi\rho}{2z} .
\end{eqnarray*}

\begin{lemma}
\label{l41}
Let the function $b_\er$ be defined by formulas \eqref{20}, \eqref{205}
with 
$$
K > \max \left(\frac{4n}{n-\mu-1}, 
\pi^2 c_2 + c_3\right) ,
$$
where $c_2$ and $c_3$ are the constants from Lemma \ref{l31}.
Then the inequality
$$ 
\D v_\er (x) - b_\er (x) \cdot \n v_\er (x) > 0
$$
holds in $\O_\er$.
\end{lemma}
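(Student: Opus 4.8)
My plan is to compute $\D v_\er-b_\er\cdot\n v_\er$ explicitly on $\O_\er$ and split it into a $\cos$-part and a $\sin$-part. Since $v_\er$ depends only on the cylindrical variables $\rho,z$, I would use $\D=\dd_\rho^2+\frac{n-2}{\rho}\dd_\rho+\dd_z^2$ together with the expressions $(b_\er)_\rho=-\mu K\rho z^{-1-\mu}$, $(b_\er)_z=-(n-1)Kz^{-\mu}$ valid on $\O_\er$ by Lemma \ref{l21}. Substituting the derivatives of $v_\er$ listed before the Lemma and setting $w=\rho/z\in[0,1)$, $c=\cos\frac{\pi\rho}{2z}>0$, $s=\sin\frac{\pi\rho}{2z}\ge0$, a direct computation gives
$$
\D v_\er-b_\er\cdot\n v_\er=Ac+Bs,
$$
$$
A=f''-\frac{\pi^2(1+w^2)}{4z^2}f+(n-1)Kz^{-\mu}f',\qquad
B=\frac{\pi w}{z}f'-\frac{(n-2)\pi}{2wz^2}f-\frac{\pi w}{z^2}f+\frac{(n-1-\mu)\pi Kw}{2z^{1+\mu}}f,
$$
with $f=f_\er$ and its derivatives taken at $z$. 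I record that $n-1-\mu>0$, that $f,f'>0$ on $[\er,1]$ by the construction in Lemma \ref{l31}, and that $z^{\mu-1}\le1$ and $w^2<1$ on $\O_\er$.

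Using property 3) of Lemma \ref{l31} ($-f''\le c_3f'z^{-\mu}$ and $f\le c_2f'z^{2-\mu}$) I would first bound the $\cos$-coefficient: $A\ge\bigl((n-1)K-c_3-\tfrac{\pi^2c_2}{2}\bigr)f'z^{-\mu}>0$, the positivity coming from $(n-1)K\ge2K>2(\pi^2c_2+c_3)$. Hence $Ac>0$ everywhere in $\O_\er$, and the task reduces to controlling $Bs$.

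I would then split $\O_\er$ into $\{\rho\ge z/2\}$ and $\{\rho<z/2\}$. On $\{\rho\ge z/2\}$, i.e. $\tfrac12\le w<1$, I would show $B\ge0$: discarding the nonnegative term $\frac{\pi w}{z}f'$ and using $z^{\mu-1}\le1$,
$$
B\ge\frac{\pi f}{z^2}\Bigl(\frac{(n-1-\mu)Kw}{2}-\frac{n-2}{2w}-w\Bigr)\ge\frac{\pi f}{z^2}\Bigl(\frac{(n-1-\mu)K}{4}-(n-1)\Bigr)\ge0,
$$
since for $\tfrac12\le w<1$ one has $\tfrac1{2w}\le1$, $w<1$, $\tfrac w2\ge\tfrac14$ and $K>\frac{4n}{n-1-\mu}>\frac{4(n-1)}{n-1-\mu}$; so $\D v_\er-b_\er\cdot\n v_\er\ge Ac>0$ there. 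On $\{\rho<z/2\}$ one has $c>\cos\frac\pi4=\tfrac1{\sqrt2}$, and I would bound $Bs$ crudely, dropping the two nonnegative terms of $Bs$ and using $s\le\frac{\pi w}{2}$, $s<1$, $w<\tfrac12$:
$$
Bs\ge-\frac{(n-2)\pi}{2wz^2}f\,s-\frac{\pi w}{z^2}f\,s\ge-\frac{(n-2)\pi^2}{4z^2}f-\frac{\pi}{2z^2}f\ge-\Bigl(\frac{(n-2)\pi^2}{4}+\frac\pi2\Bigr)c_2f'z^{-\mu}
$$
(again by property 3)). Combining this with $Ac\ge\frac1{\sqrt2}\bigl((n-1)K-c_3-\tfrac{\pi^2c_2}{2}\bigr)f'z^{-\mu}$ and $(n-1)K-c_3-\tfrac{\pi^2c_2}{2}>(n-\tfrac32)\pi^2c_2$, a short arithmetic check shows $\D v_\er-b_\er\cdot\n v_\er$ is bounded below by a positive multiple of $f'z^{-\mu}$ for every $n\ge3$, which is the required strict inequality.

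The main obstacle is the term $\frac{n-2}{\rho}\dd_\rho v_\er$ produced by the cylindrical Laplacian: it carries a $1/\rho$ factor, and near the lateral face $\rho=z$ of the cone the factor $c=\cos\frac{\pi\rho}{2z}$ degenerates, so this term cannot be swallowed by the large positive term $(n-1)Kz^{-\mu}f'c$. It is the positive drift contribution $\frac{(n-1-\mu)\pi Kw}{2z^{1+\mu}}fs$ — available precisely because $\div b_\er=0$ forces $(b_\er)_z=-(n-1)Kz^{-\mu}$ to enter with a favourable sign — that rescues the estimate near $\rho=z$; this is exactly the point of the case $\rho\ge z/2$, and it is why both lower bounds on $K$ in the hypothesis are needed.
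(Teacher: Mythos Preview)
Your proof is correct and follows essentially the same route as the paper's: both compute the $\cos$- and $\sin$-parts of $\D v_\er-b_\er\cdot\n v_\er$, invoke property 3) of Lemma~\ref{l31} to convert $f$ and $f''$ into $f'z^{-\mu}$, and then split into the regimes $\rho\ge z/2$ and $\rho<z/2$, using $K>\frac{4n}{n-\mu-1}$ in the first case to make the $\sin$-part nonnegative and $K>\pi^2c_2+c_3$ in the second to absorb the residual negative $\sin$-terms into the dominant $\cos$-term. The only differences are bookkeeping (you introduce $w=\rho/z$ and keep the two negative $\sin$-terms separate, whereas the paper first merges them via $\frac{(n-2)\pi}{2\rho z}f+\frac{\pi\rho}{z^3}f\le\frac{n\pi}{2\rho z}f$); your final ``short arithmetic check'' indeed goes through for every $n\ge3$.
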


\begin{proof}
We have
\begin{eqnarray*}
\D v_\er =
\dd_\rho^2 v_\er  + \frac{n-2}\rho \dd_\rho v_\er + \dd_z^2 v_\er \\
= \left( - \frac{\pi^2}{4z^2} f(z) -
\frac{\pi^2 \rho^2}{4z^4} f(z) + f''(z) \right) \cos \frac{\pi\rho}{2z} \\
+ \left( - \frac{(n-2)\pi}{2\rho z} f(z) - \frac{\pi\rho}{z^3} f(z)
+ \frac{\pi\rho}{z^2} f'(z) \right) \sin \frac{\pi\rho}{2z} \\
\ge \left( - \frac{\pi^2}{2z^2} f(z) + f''(z) \right) \cos \frac{\pi\rho}{2z} 
 - \frac{n\pi}{2\rho z} f(z) \sin \frac{\pi\rho}{2z} ,
\end{eqnarray*}
where we used the inequalities $\rho \le z$ in $\O_\er$
and $f'(z) > 0$.

Next,
$$
- b_\er \cdot \n v_\er =
(n-1) K z^{-\mu} f'(z) \cos \frac{\pi\rho}{2z} +
\frac{n-\mu-1}2 K \pi \rho z^{-2-\mu} f(z) \sin \frac{\pi\rho}{2z} .
$$
Taking into account Lemma \ref{l31}, we get
\begin{eqnarray}
\label{41}
\D v_\er - b_\er \cdot \n v_\er \ge
\left( (n-1) K - \frac{\pi^2}2 c_2 - c_3 \right) 
z^{-\mu} f'(z) \cos \frac{\pi\rho}{2z} \\ 
+ \left( \frac{n-\mu-1}2 K \pi \rho z^{-2-\mu} - \frac{n\pi}{2\rho z} \right) 
f(z) \sin \frac{\pi\rho}{2z} .
\nonumber
\end{eqnarray}

If $0<\rho \le z/2$ then 
$\sin \frac{\pi\rho}{2z} \le \frac{\pi\rho}{2z}$ and
$\cos \frac{\pi\rho}{2z} \ge \frac1{\sqrt 2}$, therefore
$$
\frac{n\pi}{2\rho z} f(z) \sin \frac{\pi\rho}{2z} \le 
\frac{n\pi^2}{4z^2} f(z) \le 
\frac{n\pi^2}{4} c_2 f'(z) z^{-\mu} \le
\frac{n\pi^2 \sqrt 2}{4} c_2 f'(z) z^{-\mu} \cos \frac{\pi\rho}{2z} ,
$$
where we have used Lemma \ref{l31} again.
Thus, $\D v_\er (x) - b_\er (x) \cdot \n v_\er (x) > 0$
when $\rho \le z/2$ due to the fact that
$$
K > \pi^2 c_2 + c_3 \quad \Longrightarrow \quad
(n-1) K > \left(\frac12 + \frac{n\sqrt 2} 4\right) \pi^2 c_2 + c_3 .
$$

If $z/2 <\rho < z$ then 
$4 \rho^2 \ge z^2 \ge z^{1+\mu}$ and
$\frac{n\pi}{2\rho z} \le 2 n \pi \rho z^{-2-\mu}$.
Therefore, the last term in the right hand side of \eqref{41} 
is positive, as $(n-\mu-1) K > 4n$.
\end{proof}

\begin{rem}
This construction does not work for $n=2$,
because we have used the positiveness of the multiplier
$(n-\mu-1)$ in \eqref{41}, and $\mu > 1$.
\end{rem}

\subsection{Proof of Theorem \ref{t4} and Theorem \ref{t6}}
{\it Proof of Theorem \ref{t6}.}
Let the sets $\O$, $\O_\er$ and 
the function $b_\er$ be defined as before.
Then $b_\er \in C^\infty$, $\div b_\er = 0$ and
the norms $\|b_\er\|_{L_p (\O)}$ are uniformly bounded 
with respect to $\er$.
Let $u_\er \in W_2^1(\O)$ be the unique solution to the problem
$$
\begin{cases}
- \D u_\er + b_\er \cdot \n u_\er = 0 \quad \text{in} \ \O ,\\
\left.u_\er\right|_{z= \pm 1} = \pm \cos \frac{\pi\rho}2, \ \ 
\left.u_\er\right|_{\rho = 1} = 0 .
\end{cases}
$$
Evidently,
$\left.u_\er\right|_{\overline B_1} \in C^\infty (\overline B_1)$
and $\|u_\er\|_{L_\infty (B_1)} = 1$. 
The norms $\|u_\er\|_{W_2^1(B_{1/2})}$ are also uniformly bounded
due to the Theorem \ref{t5}.
Next, it is clear that the function $u_\er$ is odd,
$$
u_\er (x_1, \dots, x_{n-1}, - x_n) =
- u_\er (x_1, \dots, x_{n-1}, x_n).
$$
Therefore, 
$\left.u_\er\right|_{z= 0} = 0$.
By the maximum principle, $u_\er(x) \ge 0$ when $z \ge 0$.
This means that $u_\er(x) \ge v_\er(x)$ on the boundary $\dd\O_\er$,
where $v_\er$ is the barrier function constructed in Section \ref{s4}
(see \eqref{345}).
Using the maximum principle for the set $\O_\er$ and the Lemma \ref{l31}, 
we get
\begin{equation}
\label{51}
u_\er (0, \dots, 0, z) \ge v_\er (0, \dots, 0, z) = f_\er(z) \ge c_1
\quad \forall z \ge 2\er . \ \ \qed 
\end{equation}

{\it Proof of Theorem \ref{t4}.}
Without loss of generality we can assume $p> n/2$.

We deduce Theorem \ref{t4} from the Theorem \ref{t6}.
Roughly speaking, we repeat here the argument of \cite{SSSZ}.
Put
\begin{eqnarray*}
H_0 (x) = \rho^{n-1} z^{-\mu} \eta (z/\rho) \quad 
\text{when} \ \  x_n \ge 0 , \\
H_0 (x_1, \dots, x_{n-1}, x_n) =
- H_0 (x_1, \dots, x_{n-1}, -x_n) \quad 
\text{when} \ \  x_n < 0 .
\end{eqnarray*}
Let
$$
(b_0)_\rho = K \rho^{2-n} \dd_z H_0, \quad
(b_0)_z = - K \rho^{2-n} \dd_\rho H_0,
$$
and all other components be zero.
The constant $K$ here is defined in Lemma \ref{l41}.
It is evident that 
$b_\er \to b_0$ a.e. as $\er \to 0$, and
$\left| b_\er (x) \right| \le C K z^{-\mu}
\chi_{[1/2, \infty)} (z/\rho)$
due to \eqref{22}.
Therefore, the same estimate has place for the function $b_0$,
$b_0 \in L_p$, and  $b_\er \to b_0$ in $L_p$ for all $p < n/\mu$.
This yields also that $\div b_0 = 0$.

By virtue of the Theorem \ref{t1} and the inequality \eqref{22},
the functions $u_\er$ are uniformly bounded in $W^2_p (U)$, for all 
subdomains $U$ with smooth boundaries 
such that $\overline U \subset \O \setminus \{0\}$.
The imbedding $W^2_p (U) \subset C (\overline U)$ is compact, 
therefore, there is a subsequence $\{ u_{\er_k} \}$
which converges uniformly on $\overline U$.
Furthermore, Theorem \ref{t5} implies that
the sequence $\{ u_{\er_k} \}$ is uniformly bounded in $W_2^1 (B_{1/2})$.
Without loss of generality one can assume that $u_{\er_k}$
tends pointwise to a function $u_0$,
$$
u_{\er_k} (x) \to u_0 (x) \quad \forall x \neq 0 ,
$$
and $u_{\er_k} \to u_0$ weakly in $W_2^1 (B_{1/2})$.
Clearly,
$\|u_0\|_{L_\infty (B_{1/2})} \le 1$.

We have for any $h \in C_0^\infty (B_{1/2})$
$$
\int u_0 \left( \D h + b_0 \cdot \n h \right) \, dx = 
\lim_{k\to\infty} \int u_{\er_k} 
\left( \D h + b_{\er_k} \cdot \n h \right) \, dx = 0 .
$$
Thus, the equations \eqref{11} and \eqref{16} are fulfilled for $u_0$, $b_0$.

Finally, the function $u_0$ is odd,
$u_0 (x_1, \dots, x_{n-1}, - x_n) = - u_0 (x_1, \dots, x_{n-1}, x_n)$,
but 
$$
u_0 (0, \dots, 0, z) \ge c_1, \quad \forall z > 0, 
$$
due to \eqref{51}.
Therefore, the function $u_0$ is discontinuous at the origin.
\qed

%%%%%%%%%%%%%%%%%%%%%%%%%%%%%%%%%%%%%%%%%%%%%%%%%%%%%%%%%%%%%%%%%%%

\section{Comments and remarks}

\subsection{Case $n=1$}
We do not consider the one-dimensional case, because the equation
$- u''(x) + b(x) u'(x) = 0$
admits an explicit solution
$$
u(x) = C_1 \int_0^x \exp \left(\int_0^y b(t)\, dt\right)\, dy + C_2.
$$

\subsection{On Stampacchia's Theorem}
It is announced in \cite{S} that a solution to \eqref{12} 
under the conditions \eqref{13} and $b \in L_n$ must be bounded 
\cite[Theorem 4.1]{S}, and therefore, H\" older continuous 
\cite[Theorem 7.1]{S} for all $n \ge 2$.
These Theorems are proven in \cite{S} for $n\ge 3$.
However, for $n=2$, both statement are false,
see Examples 1 and 2 in \S 1.
The reason is that the imbedding Theorem 
$W_2^1 \subset L_{2n/(n-2)}$ used in \cite{S} 
has no place when $n=2$.

\subsection{Morrey space}
Let us recall the definition of Morrey's spaces:
$$
M_q^\al (\O) = \{ f \in L_q (\O) : \|f\|_{M_q^\al}
= \sup_{B_r(x) \subset \O} r^{-\al}  \|f\|_{L_q(B_r(x))} < \infty \} .
$$
The following result is proved in \cite{NU}.

\begin{theorem}
\label{t7}
Let $a$ satisfy \eqref{13}, $b \in M_q^{\frac nq - 1} (B_R)$, $n/2 <q < n$,
$\div b = 0$. 
Let $u \in W_2^1 (B_R)$ solve the equation \eqref{12}.
Then $u \in C^\al (B_R)$ with some $\al >0$.
\end{theorem}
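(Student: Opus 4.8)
The plan is to run the De Giorgi--Nash--Moser machinery for the divergence-form operator $u\mapsto-\div(a\n u)+b\cdot\n u$, using $\div b=0$ to transfer the derivative off the unknown in the drift term and the Morrey hypothesis on $b$ to absorb the resulting lower-order term into the Dirichlet energy with an arbitrarily small coefficient once the ball is small. (When $b\notin L_2$ one first rewrites the drift in divergence form, $b\cdot\n u=\div(ub)$, legitimate since $\div b=0$ and $u\in L_{2n/(n-2)}$, so that \eqref{12} has a meaning in the natural weak sense.) The algebraic point is that, since $\div b=0$, for $w\in W_2^1$, $\eta\in C_0^\infty$ and a locally Lipschitz $\Phi$,
$$
\int b\cdot\n\bigl(\Phi(w)\bigr)\,\eta^2\,dx=-\int\Phi(w)\,b\cdot\n(\eta^2)\,dx ,
$$
so that testing \eqref{12} with $\eta^2\bar u^{\,2\be+1}$ for a shifted solution $\bar u=u+k$, with $\eta^2(u-k)^{\pm}$, or with $\eta^2/\bar u$ always produces a drift contribution of the schematic form $\int|b|\,g^2\,|\n\eta|$, where $g$ is a power (or the logarithm) of $\bar u$ times a cut-off.

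The core of the argument is the estimate: for every ball $B_R$ and every $g\in\mathring W_2^1(B_R)$,
$$
\int_{B_R}|b|\,g^2\,dx\le C_0\,R\,\|b\|_{M_q^{n/q-1}(B_R)}\,\|\n g\|_{L_2(B_R)}^2 , \qquad C_0=C_0(n,q).
$$
I would prove this by H\"older's inequality, $\int_{B_R}|b|g^2\le\|b\|_{L_q(B_R)}\,\|g\|_{L_{2q'}(B_R)}^2$ with $q'=q/(q-1)$, followed by interpolation of $\|g\|_{L_{2q'}}$ between $L_2$ and $L_{2n/(n-2)}$, the Sobolev inequality $\|g\|_{L_{2n/(n-2)}}\le C\|\n g\|_{L_2}$ (for $n=2$, the corresponding bound in any $L_s$, $s<\infty$), the Poincar\'e inequality $\|g\|_{L_2(B_R)}\le CR\|\n g\|_{L_2(B_R)}$, and $\|b\|_{L_q(B_R)}\le\|b\|_{M_q^{n/q-1}}R^{n/q-1}$. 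The hypothesis $n/2<q<n$ is exactly what makes $2<2q'<2n/(n-2)$, so that the interpolation is genuine and the powers of $R$ combine into a positive power (in fact into $R^1$); equivalently, this is a Fefferman--Phong type bound. The decisive feature is the gain of a power of $R$: on a small enough ball the drift is a negligible perturbation of the elliptic form.

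With this in hand the proof follows the classical scheme. Since the assertion is local, one shrinks $B_R$ so that $C_0R\|b\|_{M_q^{n/q-1}}$ is as small as desired. Moser iteration of the Caccioppoli inequalities for $\bar u^{\,2\be+1}$ — at each step the drift term is controlled via the core estimate with coefficient below $\tfrac12$ and absorbed into the left-hand side — gives local boundedness, $\|u\|_{L_\infty(B_{R/2})}\le CR^{-n/2}\|u\|_{L_2(B_R)}$. Then the oscillation inequality $\osc_{B_{R/2}}u\le\te\,\osc_{B_R}u$ for some $\te=\te(n,\al_0,\al_1,q,\|b\|_{M_q^{n/q-1}})<1$ is obtained in the usual way — De Giorgi's iteration on the level sets of $(u-k)^{\pm}$, or Moser's logarithmic estimate together with the John--Nirenberg lemma — the divergence-free identity again converting the drift into the absorbable form. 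Iterating the oscillation inequality over dyadic balls yields $\osc_{B_r}u\le C(r/R)^\al\|u\|_{L_\infty(B_R)}$ for $r<R$, hence $u\in C^\al(B_R)$. The bounded measurable matrix $a$ plays no role beyond the ellipticity bounds \eqref{13}. I expect the only genuinely delicate points to be the core estimate, the uniform arrangement of the smallness $C_0R\|b\|_{M_q^{n/q-1}}\le\tfrac12$, and the customary preliminary truncation of $u$ making the test functions $\bar u^{\,2\be+1}$ admissible in $W_2^1$.
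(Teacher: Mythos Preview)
The paper does not give its own proof of Theorem~\ref{t7}; it is quoted as a result of \cite{NU}, and the paper's contribution in \S4.3 is only the observation that Theorem~\ref{t4} shows the Morrey exponent $n/q-1$ to be sharp. So there is no proof in the paper against which to compare your proposal.

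That said, your outline is the standard scheme and is essentially what \cite{NU} does: the divergence-free identity
$$
\int b\cdot\n\bigl(\Phi(w)\bigr)\,\eta^2\,dx=-\int\Phi(w)\,b\cdot\n(\eta^2)\,dx
$$
turns the drift into a lower-order term supported where $\n\eta\neq 0$, and the Morrey hypothesis combined with interpolation produces a gain of a power of $R$. Your power count is correct: with interpolation exponent $\theta=n/(2q)$ one has $R^{n/q-1}\cdot R^{2(1-\theta)}=R$, and the condition $n/2<q<n$ is precisely what places $2q'$ strictly between $2$ and $2n/(n-2)$. Once the drift is absorbed on small balls, the De~Giorgi--Moser iteration runs as for the unperturbed operator $-\div(a\n u)$ under \eqref{13}. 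The only point to be slightly careful about is that the function to which you apply the core estimate must genuinely lie in $\mathring W_2^1(B_R)$ (so work with $g=\eta w$ rather than $w$ itself when invoking Poincar\'e), but this is routine bookkeeping.
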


The H\" older inequality implies that 
$L_p \subset M_q^{\frac nq - \frac np}$, $1\le q \le p$.
Therefore, Theorem \ref{t4} shows that the power $(n/q - 1)$ 
in Theorem \ref{t7} is sharp.

\subsection{Space $L_{2, \ln}$}
\label{p44}
The following result has place.

\begin{theorem}
\label{t3}
Let $n=2$.
Assume that the coefficient $b$ satisfies the condition
\begin{equation}
\label{14}
\int_{B_R} |b(x)|^2 \ln (1+|b(x)|^2)\, dx < \infty.
\end{equation}
Let $u \in W_2^1 (B_R)$ be a solution to \eqref{11}.
Then 
\begin{equation*}
u \in \bigcap_{q<2} W_q^2 (B_r) \subset \bigcap_{\al < 1} C^\al (B_r) \quad 
\forall r < R .
\end{equation*}
\end{theorem}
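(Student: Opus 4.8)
The plan is to follow the proof of Theorem~\ref{t12} line by line, since the divergence-free hypothesis was used there only once --- in the uniqueness Lemma~\ref{l25}, whose argument relied on $\int_{B_R} b\cdot\n\psi\,\psi\,dx = 0$ and on Lemma~\ref{l24}. I would replace that step by a direct coercivity estimate valid under the logarithmic condition \eqref{14}. As in the proof of Theorem~\ref{t12}, the statement is local, so (covering $\overline{B_r}$ by small balls and using the absolute continuity of the integral in \eqref{14}) I may assume from the start that the left-hand side of \eqref{14}, and hence the norm of $|b|^2$ in the Orlicz space $L\ln L(B_R)$, is as small as I wish.

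The key new ingredient, substituting for Lemma~\ref{l24}, is the estimate
$$
\| b\,w \|_{L_2(B_R)} \le \oo(b)\, \| \n w \|_{L_2(B_R)}
\qquad \forall\, w \in \mathring W_2^1 (B_R),
$$
with a constant $\oo(b)$ that tends to $0$ as $\bigl\|\,|b|^2\,\bigr\|_{L\ln L(B_R)} \to 0$. To prove it I note that \eqref{14} says precisely that $|b|^2 \in L\ln L(B_R)$, while the two-dimensional Trudinger inequality gives $\int_{B_R} \exp\bigl(\al\,w^2/\|\n w\|_{L_2}^2\bigr)\,dx \le C$ for $\al$ below a universal constant; equivalently, $w^2$ lies in the dual Orlicz space $\exp L(B_R)$ with $\|w^2\|_{\exp L} \le C\,\|\n w\|_{L_2}^2$. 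Since $L\ln L$ and $\exp L$ are mutually conjugate, the generalized H\"older inequality for Orlicz spaces yields
$$
\| b\,w \|_{L_2(B_R)}^2 = \int_{B_R} |b|^2 w^2 \, dx
\le 2\,\bigl\|\,|b|^2\,\bigr\|_{L\ln L(B_R)}\, \| w^2 \|_{\exp L(B_R)}
\le C\,\bigl\|\,|b|^2\,\bigr\|_{L\ln L(B_R)}\, \| \n w \|_{L_2(B_R)}^2 ,
$$
which is the claimed bound.

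With this estimate the uniqueness of a weak solution in $\mathring W_2^1(B_R)$ would follow as in Lemma~\ref{l25}: the bound on $\|b\,h\|_{L_2}$ shows that $h \mapsto \int_{B_R} \n u\cdot(\n h + b h)\,dx$ is continuous on $\mathring W_2^1(B_R)$, so the integral identity extends from $h\in C_0^\infty$ to $h=u$, whence
$$
\| \n u \|_{L_2(B_R)}^2 = - \int_{B_R} b\cdot\n u\, u\, dx
\le \| \n u \|_{L_2(B_R)}\, \| b\,u \|_{L_2(B_R)}
\le \oo(b)\, \| \n u \|_{L_2(B_R)}^2 ;
$$
if the ball is small enough that $\oo(b)<1$, then $u\equiv 0$. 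After this the proof of Theorem~\ref{t12} applies verbatim: cutting off by $\ze\in C_0^\infty(B_R)$ with $\ze|_{B_r}\equiv 1$ one gets $-\D(\ze u) + b\cdot\n(\ze u) = g \in L_q(B_R)$ for every $q<2$ (in dimension two $u\in W_2^1$ lies in every $L_s$, so the lower-order term $b\cdot\n\ze\,u$ is in $L_q$); by Lemma~\ref{l22} (applicable since $b\in L\ln L\subset L_2$ has small $L_2$-norm on the small ball) this problem has a solution in $W_q^2(B_R)$, which by the uniqueness just proved coincides with $\ze u$; hence $u\in W_q^2(B_r)$ for every $q<2$, and the embedding $\bigcap_{q<2}W_q^2(B_r)\subset\bigcap_{\al<1}C^\al(B_r)$ stated in the theorem finishes the proof.

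The only genuinely new point compared with Theorem~\ref{t12}, and the step I expect to require the most care, is the borderline Orlicz estimate above: one must recognise that condition \eqref{14} is exactly the $L\ln L$-integrability dual to the $\exp L$-integrability of $w^2$ provided by Trudinger's inequality in two dimensions, so that the product $|b|^2 w^2$ is integrable with a constant small enough to absorb. Once this duality is in place, neither the div--curl structure nor any additional bootstrap is needed.
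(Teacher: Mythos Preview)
Your proposal is correct and follows essentially the same route as the paper: both replace the div--curl step of Lemma~\ref{l25} by the estimate $\|b\psi\|_{L_2(B_R)}\le C\|b\|_{L_{2,\ln}(B_R)}\|\n\psi\|_{L_2(B_R)}$ obtained from Trudinger's exponential integrability, then feed this into the uniqueness argument and repeat the cut-off/Lemma~\ref{l22} scheme from the proof of Theorem~\ref{t12}. The only cosmetic difference is that you phrase the product estimate via the Orlicz--H\"older duality between $L\ln L$ and $\exp L$, whereas the paper derives it directly from Young's inequality $\xi\eta\le\xi\ln\xi+e^\eta$; these are equivalent, and the paper likewise uses that $\|b\|_{L_{2,\ln}(B_r)}\to0$ as $r\to0$ to localize.
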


Denote by $L_{2, \ln} (B_R)$ the space of measurable functions $b$
(modulo functions vanishing on the set of full measure)
satisfying \eqref{14} (clearly, $L_{2, \ln} \subset L_2$).
It is the Orlicz space corresponding to the function $t^2 \ln (1+t^2)$.
The theory of Orlicz spaces can be found for example in \cite{KR}.
Recall some basic facts on such space.
The  quantity
$$
\| b \|_{L_{2, \ln} (B_R)} = \inf \left\{ k > 0 : 
\int_{B_R} \left|\frac{b(x)}k\right|^2 
\ln \left( 1 + \left|\frac{b(x)}k\right|^2 \right) dx \le 1 \right\}
$$
is well defined for $b \in L_{2, \ln} (B_R)$.
One can show that this functional is a norm,
and that 
$$
\| b \|_{L_{2, \ln} (B_r)} \to 0 \quad \text{as} \quad r \to 0 .
$$

\begin{lemma}
\label{lem22}
Let $n=2$, $R\le 1$, $b \in L_{2, \ln} (B_R)$, 
$\psi \in \mathring W_2^1 (B_R)$.
Then $b \psi \in L_2 (B_R)$ and
$$
\|b \psi\|_{L_2 (B_R)} \le C_0 \| b \|_{L_{2, \ln} (B_R)} \|\n\psi\|_{L_2 (B_R)} ,
$$
where $C_0$ is an absolute constant.
\end{lemma}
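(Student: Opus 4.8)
The plan is to estimate $\int_{B_R}|b|^2|\psi|^2\,dx$ by splitting the pointwise product $|b|^2\cdot|\psi|^2$ with a Young-type inequality tailored to the Orlicz condition on $b$, and then to control the resulting exponential term by the Moser--Trudinger inequality in dimension two.

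Concretely, I would start from the elementary pointwise inequality
$$
XY \le X\ln(1+X) + e^Y, \qquad X,Y \ge 0 ,
$$
which is the Fenchel--Young inequality for the pair of complementary convex functions $\int_0^X\ln(1+s)\,ds=(1+X)\ln(1+X)-X$ and $\int_0^Y(e^s-1)\,ds=e^Y-1-Y$, followed by the trivial bounds $\ln(1+X)\le X$ and $1+Y\ge0$. Setting $k=\|b\|_{L_{2,\ln}(B_R)}$ and $M=\|\n\psi\|_{L_2(B_R)}$ (one may assume $M>0$), I would apply this inequality pointwise with $X=|b(x)|^2/k^2$ and $Y=c\,|\psi(x)|^2/M^2$, where $c$ is an absolute constant. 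Integrating over $B_R$ and using that, by the definition of the Luxemburg norm, $\int_{B_R}(|b|^2/k^2)\ln(1+|b|^2/k^2)\,dx\le1$, one gets
$$
\frac{c}{k^2M^2}\int_{B_R}|b|^2|\psi|^2\,dx \le 1+\int_{B_R}\exp\!\Bigl(\tfrac{c\,|\psi|^2}{M^2}\Bigr)\,dx .
$$

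The remaining integral is where the actual content lies: by the Moser--Trudinger inequality, for $c\le4\pi$ one has $\int_{B_R}\exp(c|\psi|^2/M^2)\,dx\le C|B_R|$ for every $\psi\in\mathring W_2^1(B_R)$, and since $R\le1$ this is bounded by an absolute constant. (This is the only point where the hypothesis $R\le1$ enters.) Fixing $c=4\pi$, the last two displays combine to give $\int_{B_R}|b|^2|\psi|^2\,dx\le c^{-1}(1+C)\,k^2M^2$, that is, $b\psi\in L_2(B_R)$ and $\|b\psi\|_{L_2(B_R)}\le C_0\,\|b\|_{L_{2,\ln}(B_R)}\,\|\n\psi\|_{L_2(B_R)}$ with $C_0$ absolute, as claimed.

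The main obstacle, such as it is, is simply the Moser--Trudinger inequality, which supplies the exponential integrability of $\mathring W_2^1$ functions in two dimensions; everything else is routine bookkeeping. The only subtlety is the matching of constants: the exponent $c$ in the Young inequality must not exceed the critical Moser--Trudinger exponent $4\pi$, which is why $c$ is taken to be a fixed absolute constant rather than optimized. (Equivalently, one could phrase the whole argument through the Orlicz duality between $L\log L$ and $\exp L$ together with the generalized H\"older inequality, citing \cite{KR}; the two routes coincide.)
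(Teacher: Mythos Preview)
Your proof is correct and follows essentially the same route as the paper: the paper invokes the Trudinger--type exponential integrability estimate $\int_{B_R}\exp\bigl(|\psi|^2/(a_1^2\|\psi\|_{W_2^1}^2)\bigr)\,dx\le a_2|B_R|$ (citing \cite[Theorem~7.15]{GT}) together with the elementary Young inequality $\xi\eta\le\xi\ln\xi+e^\eta$, which matches your use of Moser--Trudinger plus $XY\le X\ln(1+X)+e^Y$. The only cosmetic differences are your choice of $X\ln(1+X)$ (tailored to the Orlicz function in the definition of $\|\cdot\|_{L_{2,\ln}}$) versus the paper's $\xi\ln\xi$, and your normalization by $\|\nabla\psi\|_{L_2}$ rather than $\|\psi\|_{W_2^1}$.
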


\begin{proof}
Follows form the fact (see, for example, \cite[Theorem 7.15]{GT})
that all functions from $\mathring W_2^1 (B_R)$ satisfy the estimate
\begin{equation*}
\int_{B_R} \exp 
\left( \frac{|\psi(x)|^2}{a_1^2 \|\psi\|_{W_2^1 (B_R)}^2} \right) dx 
\le a_2 |B_R|
\end{equation*}
with two constants $a_1$, $a_2$, 
and the elementary inequality
$$
\xi \eta \le \xi \ln \xi + e^\eta, \quad \xi, \eta > 0 . \quad \qedhere
$$
\end{proof}

Now, the proof of Theorem \ref{t3} is similar to the proof of Theorem \ref{t12}.
%Using Lemma \ref{lem22} we get an analogue of Lemma \ref{l22} with $p=2$
%under the condition $\| b \|_{L_{2, \ln} (B_R)} \le \er_0$
%instead of the condition $\| b \|_{L_2 (B_R)} \le \er_0$.
The uniqueness of weak solution (an analogue of Lemma \ref{l25})
follows from the estimate
$$
\left|\int_{B_R} b \cdot \n u u \, dx\right|  
\le \|b u\|_{L_2(B_R)} \|\n u\|_{L_2(B_R)}  
\le C_1 \|b\|_{L_{2, \ln} (B_R)} \|\n u\|_{L_2(B_R)}^2 
\quad \forall \ u \in \mathring W_2^1 (B_R) 
$$
if the norm $\| b \|_{L_{2, \ln} (B_R)}$ is sufficiently small.
%Multiplying a solution $u$ to \eqref{11} by a cut-off function $\ze$,
%we get
%$$
%- \D (\ze u) + b \cdot \n (\ze u) \in L_q (B_R) \quad \forall q<2 ,
%$$
%and therefore, $ \ze u \in W_q^2 (B_R)$ due to Lemma \ref{l22}.

We borrowed the condition \eqref{14} from \cite{NU}.
Under the conditions \eqref{13} and \eqref{14} it is proven in \cite{NU}
that any solution to the equation \eqref{12} is H\" older continuous
(see comments at the end of \S 2 in \cite{NU}).
Note that the condition \eqref{14} can not be changed 
by the finiteness of the integral
$\int_{B_R} |b(x)|^2 \left(\ln (1+|b(x)|^2)\right)^\ga dx$ with any $\ga <1$
(see Example 1).

\subsection{Maximum principle}
If the coefficient $b$ satisfies the conditions of 
Theorems \ref{t12}, \ref{t2} or \ref{t3}, then a solution $u$ to 
the equation \eqref{12} satisfies the maximum principle 
\cite[Corollary 2.2 and comments at the end of \S 2]{NU}.
Examples 2) and 4) in Section 1 show that the conditions imposed on $b$
again can not be weakened.

\subsection{Open questions}
The following questions remain open.
\begin{itemize}
\item
Let $n \ge 3$, $b \in L_p (B_R)$, $2\le p < n$, and $\div b = 0$.
Whether a solution $u \in W_2^1 (B_R)$ to equation \eqref{11} 
should be bounded in $B_r$, $r<R$ ?
\item 
Let $n=2$, $b \in L_2 (B_R)$. 
Whether a solution $u \in W_2^1 (B_R) \cap L_\infty (B_R)$ 
to equation \eqref{11} should be continuous?
\end{itemize}
%%%%%%%%%%%%%%%%%%%%%%%%%%%%%%%%%%%%%%%%%%%%%%%%%%%%%%%%%%%%%%%%%%%

\end{document}